\theoremstyle{plain}
  \newtheorem{theorem}{Theorem}[section]
  \newtheorem{corollary}[theorem]{Corollary}
  \newtheorem{lemma}[theorem]{Lemma}
  \newtheorem{proposition}[theorem]{Proposition}
\theoremstyle{definition}
  \newtheorem{definition}[theorem]{Definition}
  \newtheorem{example}[theorem]{Example}
  \newtheorem{remark}[theorem]{Remark}
\newcommand{\Q}{\mathbb{Q}}
\newcommand{\op}{\mathrm{op}}
\newcommand{\ob}{\mathrm{ob}}
\newcommand{\id}{\mathrm{id}}
\newcommand{\ra}{\longrightarrow}
\newcommand{\DF}{\mathrm{DF}}
\newcommand{\F}{\mathcal{F}}
\newcommand{\I}{\mathcal{I}}
\newcommand{\Po}{\mathcal{P}}
\author{Kohei Tanaka}
\title {\textbf{Discrete Euler integration over functions on finite categories}}
\begin{document}

\renewcommand{\thesection}{\arabic{section}}

\maketitle

{\footnotesize 2010 Mathematics Subject Classification : 55P10, 46M20}

{\footnotesize Keywords : small categories, Euler characteristic, Euler integration}

%%%%%%%%%%%% section 1 %%%%%%%%%%%%%%%%%%%%

\begin{abstract}
This paper provides the theory of integration with respect to Euler characteristics of finite categories.
As an application, we use sensors to enumerate the targets lying on a poset.
This is a discrete analogue to Baryshnikov and Ghrist's work on integral theory using topological Euler characteristics.
\end{abstract}

%\tableofcontents

\section{Introduction}

It has long been known that the Euler characteristic is an important homotopy invariant of a space.
By regarding it as a topological measure, we can derive the theory of the integral with respect to the Euler characteristic ({\em Euler integration}).
Baryshnikov and Ghrist developed the theory of Euler integration, and they applied it to sensor networks \cite{BG09}, \cite{BG10}. 
They established a way to use sensors to enumerate targets in a filed.
Let us review briefly a simple case that they considered.

Consider a situation in which there are a finite number of targets $T$ lying on a topological space $X$.
Assume that each point of $X$ has a sensor recording the nearby targets, 
and each target $t \in T$ has a contractible target support: 
\[
U_{t}=\{x \in X \mid \textrm{the sensor at } x \textrm{ detects } t\}.
\]
The sensors return the counting function $h : X \to \mathbb{N} \cup \{0\}$ given by the number of detectable sensors at each point:
\[
h(x) = \{t \in T \mid x \in U_{t}\}^{\sharp}.
\]
Then, we can enumerate the targets by integrating with respect to the Euler characteristic $\chi$ (Theorem 3.2 of \cite{BG09}):
\[
T^{\sharp} = \int_{X} h d \chi.
\]

Our goal in this paper is to show a discrete analogue of the above.
The Euler characteristic is defined not only for topological spaces, 
but also for finite combinatorial objects, such as posets \cite{Rot64}, groupoids \cite{BD01}, and categories \cite{Lei08}.
We will focus on the Euler characteristics of finite categories, which is the most general case 
(note that a poset can be thought of as an acyclic category with at most one morphism between any pair of objects).  
By using this instead of topological Euler characteristic, we can derive a discrete version of Euler integration.
We treat certain rational-valued functions on objects of a category as integrable functions, 
since Euler characteristics of finite categories take rational values.

As an application of discrete Euler integration, we consider the counting problem in a network flowing in only one direction (such as transmission of electricity, streams of water or rivers, and acyclic traffic).
We propose a way to use sensors to enumerate the targets lying on a one-way network.

The remainder of this paper is organized as follows. 
Section 2 prepares some necessary definitions and notations for Euler characteristics of finite categories; we use the system created by Leinster \cite{Lei08}. 
In Section 3, we introduce the class of integrable ({\em definable}) functions on a finite category.
Following this, we define the Euler integration of definable functions, and investigate its properties.
Section 4 presents an application with sensor networks.

\section{Euler characteristics of finite categories}

The Euler characteristic of a finite category was introduced by Leinster \cite{Lei08}; 
it is a generalization of the concept of M\"{o}bius inversion \cite{Rot64} for posets.

\begin{definition}
Suppose that $C$ is a finite category consisting of a finite number of objects and morphisms.
We denote the set of objects of $C$ by $\ob(C)$, and the set of morphisms from $x$ to $y$ by $C(x,y)$.
\begin{enumerate}
\item The {\em similarity matrix} of $C$ is the function 
$\zeta : \mathrm{ob}(C) \times \mathrm{ob}(C) \to \mathbb{Q}$, given by the cardinality of each set of morphisms: $\zeta(a,b)=C(a,b)^{\sharp}$.
\item Let $u : \ob(C) \to \mathbb{Q}$ denote the column vector with $u(a)=1$, for any object $a$ of $C$.
A {\em weighting} on $C$ is a column vector $w:\mathrm{ob}(C)\to \mathbb{Q}$ such that $\zeta w=u$, and 
dually, a {\em coweighting} on $C$ is a row vector $v:\mathrm{ob}(C)\to \mathbb{Q}$ such that $v \zeta = u^{\ast}$, where $u^{\ast}$ is the transposition of the matrix $u$.
\end{enumerate}
\end{definition}

Note that we have
\[
\sum_{i \in \mathrm{ob}(C)}w(i) = u^{\ast} w = v \zeta w= v u=\sum_{j \in \mathrm{ob}(C)} v(j),
\]
if both a weighting and a coweighting exist. 
Moreover, 
\[
\sum_{i \in  \mathrm{ob}(C)} w(i)= u^{\ast} w = v \zeta w = v \zeta w' = u^{\ast} w' = \sum_{i \in \mathrm{ob}(C)} w'(i),
\]
for two (co)weightings $w$ and $w'$ on $C$. This guarantees the following definition of the Euler characteristic.

\begin{definition}[\cite{Lei08}]\label{chara}
Let $C$ be a finite category. 
We say that $C$ \textit{has Euler characteristic} if it has 
both a weighting $w$ and a coweighting $v$ on $C$. 
Then, the \textit{Euler characteristic} of $C$ is defined by
\[
\chi(C) = \sum_{i \in \mathrm{ob}(C)}w(i) = \sum_{j \in \mathrm{ob}(C)}v(j).
\]
\end{definition}

\begin{proposition}[Example 2.3 (d) in \cite{Lei08}]\label{contractible}
If $C$ has Euler characteristic and either an initial or a terminal object, then $\chi(C)=1$.
\end{proposition}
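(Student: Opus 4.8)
The plan is to write down an explicit weighting (in the terminal case) or coweighting (in the initial case) concentrated at the distinguished object, and then invoke the well-definedness observation made just before Definition~\ref{chara}: since $\sum_{i}w(i)$ has the same value for \emph{every} weighting $w$ on $C$, and likewise for coweightings, and since $C$ is assumed to have Euler characteristic, it suffices to evaluate $\chi(C)$ on one conveniently chosen (co)weighting.

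First I would treat the case of a terminal object $z$. The defining property of $z$ says exactly that $C(a,z)^{\sharp}=1$, i.e.\ $\zeta(a,z)=1$, for every object $a$. I would then propose the column vector $w\colon\ob(C)\to\Q$ with $w(z)=1$ and $w(a)=0$ for $a\neq z$, and check that $\zeta w=u$: indeed $(\zeta w)(a)=\sum_{b}\zeta(a,b)w(b)=\zeta(a,z)=1=u(a)$. Hence $w$ is a weighting, and $\chi(C)=\sum_{i}w(i)=w(z)=1$. The case of an initial object $z$ is dual: now $\zeta(z,b)=1$ for every object $b$, so the row vector $v$ with $v(z)=1$ and $v(a)=0$ otherwise satisfies $(v\zeta)(b)=\sum_{a}v(a)\zeta(a,b)=\zeta(z,b)=1$, i.e.\ $v\zeta=u^{\ast}$; thus $v$ is a coweighting and $\chi(C)=\sum_{j}v(j)=v(z)=1$.

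There is essentially no computational obstacle here; the only point that needs care is logical. The hypothesis ``$C$ has Euler characteristic'' guarantees the existence of both a weighting and a coweighting, which is what makes $\chi(C)$ defined at all; the explicit construction above supplies only one of the two, and it is the independence-of-choice remark preceding Definition~\ref{chara} that then pins the common value down to $1$. I would state this reliance explicitly, so the argument is not mistaken for a claim that a category with a terminal (or initial) object automatically has Euler characteristic — that stronger statement would additionally require producing the missing coweighting (resp.\ weighting), which need not exist in general.
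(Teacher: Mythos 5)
Your proof is correct, and it is exactly the standard argument behind the cited result: the paper itself gives no proof (it quotes Example 2.3(d) of Leinster), and Leinster's argument is precisely your construction of the delta (co)weighting at the terminal (resp.\ initial) object combined with the independence-of-choice remark preceding Definition~\ref{chara}. Your explicit caveat that the hypothesis ``$C$ has Euler characteristic'' supplies the missing coweighting (resp.\ weighting) is the right logical point and matches how the result is meant to be read.
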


For a small category $C$, the classifying space $BC$ is defined as the geometric realization of the nerve of $C$.
When $C$ never has a nontrivial circuit of morphisms ({\em acyclic}), 
the following relation holds between the topological Euler characteristic and the combinatorial one defined in Definition \ref{chara}.

\begin{proposition}[Proposition 2.11 in \cite{Lei08}]\label{acyclic}
Every finite acyclic category $C$ has Euler characteristic, and $\chi(C)=\chi(BC)$.
\end{proposition}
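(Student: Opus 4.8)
The plan is to establish the two assertions separately: first that $C$ admits both a weighting and a coweighting, and then that the resulting $\chi(C)$ agrees with $\chi(BC)$, the latter by computing directly with the inverse of the similarity matrix.

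For existence, I would exploit acyclicity to triangularize $\zeta$. Since $C$ is acyclic, the relation $a\le b\Leftrightarrow C(a,b)\neq\emptyset$ is a partial order on $\ob(C)$ — reflexive because of identities, transitive because of composition, and antisymmetric precisely because $C$ has no nontrivial circuits — and the only endomorphisms are identities, so $\zeta(a,a)=C(a,a)^{\sharp}=1$. Listing the objects along a linear extension of $\le$ makes $\zeta$ upper-triangular with $1$'s on the diagonal, hence invertible over $\Q$. Then $w=\zeta^{-1}u$ is a weighting and $v=u^{\ast}\zeta^{-1}$ a coweighting, so $C$ has Euler characteristic, and $\chi(C)=u^{\ast}w=u^{\ast}\zeta^{-1}u$.

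For the equality with $\chi(BC)$, write $\zeta=I+N$ with $N(a,b)=\zeta(a,b)$ for $a\neq b$ and $N(a,a)=0$; in the ordering above $N$ is strictly upper-triangular, hence nilpotent, so the geometric-series expansion $\zeta^{-1}=\sum_{k\ge 0}(-1)^{k}N^{k}$ is a finite sum. The crucial observation is that $N^{k}(a,b)$ is the number of strings of morphisms $a=z_{0}\to z_{1}\to\cdots\to z_{k}=b$ with $z_{i-1}\neq z_{i}$ for all $i$; by acyclicity such a string has $z_{0}<z_{1}<\cdots<z_{k}$ and each of its morphisms is non-identity, so these strings are exactly the nondegenerate $k$-simplices of the nerve of $C$ from $a$ to $b$. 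Summing over $a,b$, the quantity $u^{\ast}N^{k}u$ equals the total number $c_{k}$ of nondegenerate $k$-simplices, finitely many since composable strings of non-identity morphisms in a finite acyclic category have bounded length. Hence
\[
\chi(C)=u^{\ast}\zeta^{-1}u=\sum_{k\ge 0}(-1)^{k}u^{\ast}N^{k}u=\sum_{k\ge 0}(-1)^{k}c_{k},
\]
which is $\chi(BC)$, because $BC$ is a finite CW complex with one $k$-cell for each nondegenerate $k$-simplex of the nerve.

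The triangularization and the series expansion of $\zeta^{-1}$ are routine. The step I expect to demand the most care is the bijection between the combinatorial data counted by $N^{k}(a,b)$ and the nondegenerate $k$-simplices: one must check that ``composable string with all consecutive objects distinct'' coincides with ``nondegenerate simplex'' — which is exactly where acyclicity enters a second time, ruling out identity morphisms inside such a string and forcing all vertices of a nondegenerate simplex to be distinct — and one should also make explicit the standard fact that the Euler characteristic of a finite CW complex is the alternating sum of its numbers of cells in each dimension.
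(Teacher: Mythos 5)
Your proof is correct: the paper states this result without proof, simply citing Leinster, and your argument — triangularizing $\zeta$ along a linear extension, inverting via the nilpotent expansion $\zeta^{-1}=\sum_{k}(-1)^{k}N^{k}$, and identifying $u^{\ast}N^{k}u$ with the number of nondegenerate $k$-simplices of the nerve — is essentially the proof given in the cited source. The delicate points you flag (only identity endomorphisms in an acyclic category, and ``nondegenerate simplex'' $=$ ``string of non-identity morphisms'') are handled correctly, so nothing is missing.
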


For topological Euler characteristics, we have the following well-known inclusion-exclusion formula:
\[
\chi(A \cup B)=\chi(A) + \chi(B)-\chi(A \cap B),
\]
for suitable subspaces $A$ and $B$ of a space. 
Unfortunately, in the case of Euler characteristics of categories, it does not hold in general.
In \cite{Tan}, the author introduced two classes of categories satisfying the inclusion-exclusion formula stated above.

\begin{definition}
Let $C$ be a small category. A {\em filter} $D$ is a full subcategory of $C$ such that an object $y$ of $C$ belongs to $D$ whenever $C(x,y) \not= \emptyset$ for some object $x$ of $D$.
Dually, an {\em ideal} $D$ of $C$ is a full subcategory such that an object $y$ of $C$ belongs to $D$ whenever $C(y,x) \not= \emptyset$ for some object $x$ of $D$. in other words, a full subcategory $D$ of $C$ is an ideal if and only if the opposite category $D^{\op}$ is a filter of $C^{\op}$.
\end{definition}

These are generalizations for posets of filters and ideals \cite{Sta12}, \cite{Zap98}.
Let $D$ be a full subcategory of a small category $C$. The category of complements $C \backslash D$ is defined as the full subcategory of $C$ whose set of objects is $\ob(C) \backslash \ob(D)$.
If $D$ is a filter (an ideal) of $C$, then $C \backslash D$ is an ideal (a filter) of $C$.
In other words, a filter or an ideal $D$ determines a functor $C \to \mathbb{I}$, where $\mathbb{I}$ is the poset formed of $0<1$.
Hence, we have three bijective sets: the set $\F(C)$ of filters of $C$, the set $\I(C)$ of ideals of $C$, and the set $C^{\mathbb{I}}$ of functors from $C$ to $\mathbb{I}$.

The following theorem is shown in \cite{Tan}.

\begin{theorem}[Corollary 3.4 of \cite{Tan}]\label{inclusion-exclusion}
Let both $A$ and $B$ be either filters or ideals of a finite category $C$. If each of $A$, $B$, $A \cap B$, and $A \cup B$ has an Euler characteristic, then we have
\[
\chi(A \cup B)=\chi(A)+\chi(B)-\chi(A \cap B).
\]
\end{theorem}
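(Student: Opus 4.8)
We may assume $A$ and $B$ are filters of $C$: if they are ideals, pass to $C^{\op}$, under which filters and ideals are interchanged, $A\cap B$ and $A\cup B$ become $A^{\op}\cap B^{\op}$ and $A^{\op}\cup B^{\op}$, and the Euler characteristic is preserved (a weighting on $D$ is a coweighting on $D^{\op}$, and vice versa). So let $A,B\in\F(C)$. The plan is to build, from a choice of coweightings on $A$, $B$ and $A\cap B$, an explicit coweighting on $A\cup B$ whose sum of entries is $\chi(A)+\chi(B)-\chi(A\cap B)$, and then to use that this sum must equal $\chi(A\cup B)$ because $A\cup B$ also possesses a weighting.

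First I would pin down the shape of the similarity matrix of $A\cup B$. Set $R=\ob(A\cap B)$, $P=\ob(A)\setminus R$ and $Q=\ob(B)\setminus R$, so $\ob(A\cup B)=P\sqcup Q\sqcup R$, $\ob(A)=P\sqcup R$, $\ob(B)=Q\sqcup R$. The categories $A$, $B$ and $A\cap B$, being filters of $C$, are filters of $A\cup B$ as well, so the complements of $A$ and of $B$ inside $A\cup B$ are ideals, on the object sets $Q$ and $P$; hence there is no morphism into an object of $P$ from an object outside $P$, and similarly for $Q$. In block form with order $(P,Q,R)$ this means the similarity matrix $\zeta$ of $A\cup B$ is block upper triangular — the blocks $\zeta_{QP},\zeta_{RP},\zeta_{PQ},\zeta_{RQ}$ vanish — while by fullness the restrictions of $\zeta$ to $P\sqcup R$, to $Q\sqcup R$ and to $R$ are the similarity matrices of $A$, $B$ and $A\cap B$.

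Next, fix coweightings $v^{A}=(v^{A}_{P},v^{A}_{R})$ on $A$, $v^{B}=(v^{B}_{Q},v^{B}_{R})$ on $B$ and $v^{A\cap B}$ on $A\cap B$ (they exist by hypothesis). I claim the row vector $\tilde v=\bigl(v^{A}_{P},\; v^{B}_{Q},\; v^{A}_{R}+v^{B}_{R}-v^{A\cap B}\bigr)$ over $\ob(A\cup B)$ is a coweighting on $A\cup B$. Checking $\tilde v\,\zeta=u^{\ast}$ block by block: the $P$-component of $\tilde v\,\zeta$ involves only $v^{A}_{P}$ and $\zeta_{PP}$ (since $\zeta_{QP}=\zeta_{RP}=0$), so it reduces to the $P$-part of the equation $v^{A}\zeta_{A}=u^{\ast}$, which is all ones; symmetrically for the $Q$-component; and the $R$-component is obtained by adding the $R$-parts of the defining equations $v^{A}\zeta_{A}=u^{\ast}$ and $v^{B}\zeta_{B}=u^{\ast}$ and subtracting that of $v^{A\cap B}\zeta_{A\cap B}=u^{\ast}$, which telescopes to all ones. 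Summing the entries of $\tilde v$ and regrouping the $R$-sum with the $P$- and $Q$-sums then gives $\sum_i\tilde v(i)=\chi(A)+\chi(B)-\chi(A\cap B)$.

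Finally, since $A\cup B$ has an Euler characteristic it has a weighting $w$, and the identity $\sum_i v(i)=v\,\zeta\,w=\sum_i w(i)$ — valid for any coweighting $v$ and weighting $w$ of the same category, as displayed just before Definition~\ref{chara} — applied to $v=\tilde v$ gives $\sum_i\tilde v(i)=\chi(A\cup B)$. Comparing with the previous paragraph yields the formula. The main obstacle, I expect, is the first step: one must verify that the filter hypothesis really forces all four of $\zeta_{QP},\zeta_{RP},\zeta_{PQ},\zeta_{RQ}$ to vanish, in particular the two involving $R$ (which uses that $A\cap B$ is itself a filter), since it is exactly this triangular shape that makes $\tilde v$ a coweighting; the rest is routine linear bookkeeping, and one should also be a little careful that the ideal case truly reduces to the filter case via $C^{\op}$.
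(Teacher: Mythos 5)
Your proof is correct. There is no in-paper argument to compare it with: the paper states Theorem \ref{inclusion-exclusion} as Corollary 3.4 of \cite{Tan} and gives no proof (the cited preprint presumably derives it from its more general machinery for covers of categories), so your direct construction at the level of the defining linear algebra is a genuinely self-contained alternative. The step you flagged as the main obstacle does go through: with $\ob(A\cup B)=P\sqcup Q\sqcup R$, $R=\ob(A\cap B)$, $P=\ob(A)\setminus R$, $Q=\ob(B)\setminus R$, the blocks $\zeta_{PQ}$ and $\zeta_{RQ}$ vanish because $A$ is a filter (every morphism out of an object of $A$ ends in $A$, and $Q\cap\ob(A)=\emptyset$), while $\zeta_{QP}$ and $\zeta_{RP}$ vanish because $B$ is a filter; in particular the vanishing of the $R$-blocks needs only that $A$ and $B$ are filters, not that $A\cap B$ is one, so your parenthetical worry is harmless. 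Given that block shape and fullness, your vector $\tilde v=(v^A_P,\;v^B_Q,\;v^A_R+v^B_R-v^{A\cap B})$ is indeed a coweighting on $A\cup B$; its total equals $\chi(A)+\chi(B)-\chi(A\cap B)$ because each of $A$, $B$, $A\cap B$ has Euler characteristic (so the sum of any of their coweightings is the corresponding $\chi$), and since $A\cup B$ has a weighting, the displayed identity before Definition \ref{chara} forces the sum of your coweighting to be $\chi(A\cup B)$. The reduction of the ideal case via $C^{\op}$ is also fine, since $\zeta_{C^{\op}}$ is the transpose of $\zeta_{C}$, which interchanges weightings and coweightings and preserves $\chi$, and passing to opposites commutes with unions and intersections of full subcategories. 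Finally, your implicit reading of the hypothesis (both $A$ and $B$ filters, or both ideals) is the intended one and is essential: in the category with objects $a,b$ and two parallel morphisms $a\to b$, taking the filter $A=\{b\}$ and the ideal $B=\{a\}$ gives $\chi(A\cup B)=0$ while $\chi(A)+\chi(B)-\chi(A\cap B)=2$.
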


\section{Discrete Euler integration over functions on categories}

Throughout this paper, we will occasionally identify a full subcategory $B$ of a category $C$ as the underlying set $\ob(B)$ of objects.
Moreover, for two categories $C$ and $D$, a map on objects $\ob(C) \to \ob(D)$ will be simply denoted by $C \to D$.

\begin{definition}
Let $C$ be a small category.
\begin{enumerate}
\item Two objects $x$ and $y$ of $C$ are {\em reflexible} if $C(x,y) \neq \emptyset$ and $C(y,x) \neq \emptyset$. In this case, let us denote $x \sim y$.
We then have an equivalence relation on the set of objects of $C$. 
The quotient set $\Po(C)$ is equipped with a partial order that is defined by $[x] \leq [y]$ if $C(x,y) \neq \emptyset$.
Here, this order does not depend on the choice of representing objects.
\item Let $\vee x$ denote the {\em prime filter} generated from an object $x$ of $C$, i.e., it consists of ending objects of morphisms starting at $x$:
\[
\vee x =\{ y \in \ob(C) \mid C(x,y) \neq \emptyset\}.
\]
Dually, let $\wedge x$ denote the {\em prime ideal} generated from $x$ and consisting of starting objects of morphisms ending at $x$. 
\item An object $x$ of $C$ is called {\em maximal} if it satisfies either $C(x,y)=C(y,x)=\emptyset$ or $C(y,x) \neq \emptyset$ for any object $y$.
Dually, a {\em minimal} object $x$ of $C$ satisfies either $C(x,y)=C(y,x)=\emptyset$ or $C(x,y) \neq \emptyset$ for any object $y$.
\end{enumerate}
\end{definition}

Baryshnikov and Ghrist introduced a class of integrable ({\em definable, constructible}) functions with respect to the topological Euler characteristic \cite{BG09}, \cite{BG10}.
Below, we give the definition for definable maps in our discrete setting.

\begin{definition}
A map $f : C \to D$ on objects is called {\em definable} if it preserves the reflexible relation.
That is, $f(x) \sim f(y)$ in $D$ for any reflexible pair $x \sim y$ in $C$.
In particular, a definable map $C \to \mathbb{Q}$ is called a {\em definable function} on $C$. 
Here, we regard the set of rational numbers $\mathbb{Q}$ as a totally ordered set in the canonical order. Therefore, a definable function sends a reflexible pair to the same value.
Let $\DF(C)$ denote the $\mathbb{Q}$-vector space of definable functions on $C$. 
For a poset $P$, every $\mathbb{Q}$-valued function on $P$ is definable, and hence, 
the vector space $\DF(P)$ consists of $\mathbb{Q}$-valued functions on $P$.
\end{definition}

\begin{remark}\label{remark}
The above definitions related to a category $C$ can be described in terms of the poset $\Po(C)$ and the canonical functor $\pi : C \to \Po(C)$.
\begin{itemize}
\item A full subcategory $D$ of $C$ is a filter (an ideal) of $C$ if and only if $\pi(D)$ is a filter  (an ideal) of $\Po(C)$.
\item An object $x$ of $C$ is maximal (minimal) if and only if $\pi(x)$ is maximal (minimal) in $\Po(C)$.
\item A map $f : C \to D$ on objects of two categories $C$ and $D$ is definable if and only if it induces a map $\tilde{f} : \Po(C) \to \Po(D)$ (it is not necessary that the order be preserved) that makes the following diagram commute:
\[
\xymatrix{
C \ar[r]^{f} \ar[d]_{\pi} & D \ar[d]^{\pi} \\
\Po(C) \ar[r]_{\tilde{f}} & \Po(D).
}
\]
\end{itemize}
\end{remark}

\begin{definition}
Let $B$ be a full subcategory of a category $C$, and let $f : C \to \mathbb{Q}$ be a function on objects.
The {\em clipping} of $f$ on $B$ is the function $f_{B} : C \to \mathbb{Q}$ defined by $f_{B}(x)=f(x)$ if $x \in \ob(B)$, and $f_{B}(x)=0$ otherwise.
This should not be confused with the restriction $f_{|B}$, whose domain is $B$.
The clipping $\delta_{B}$ of the constant function $\delta : C \to \mathbb{Q}$ onto $1 \in \mathbb{Q}$ is called the {\em incidence function} on $B$. 
The incidence function on $B$ is definable if $B$ is either a filter or an ideal of $C$.
\end{definition}

In Definition 2.3 of \cite{BG09}, a definable (constructible) function on a finite simplicial complex 
is defined to be a linear form of the incidence functions on simplices.

\begin{lemma}
A rational-valued function $f$ on a finite poset $P$ can be described using the following finite linear forms:
\[
f = \sum_{i=1}^{n} a_{i} \delta_{A_{i}}=\sum_{j=1}^{m}b_{j}\delta_{B_{j}},
\]
where $n,m \geq 1$; $a_{i},b_{j} \in \mathbb{Q}$; $A_{i} \in \F(P)$; and $B_{i} \in \I(P)$.
\end{lemma}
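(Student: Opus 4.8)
The plan is to reduce the statement to a spanning claim in the finite-dimensional $\Q$-vector space $\DF(P)$ and then to verify that claim using the prime filters and prime ideals generated by the elements of $P$. As noted just above the lemma, $\DF(P)$ coincides with the space of \emph{all} $\Q$-valued functions on the finite set $\ob(P)$; hence it has dimension $\ob(P)^{\sharp}$, with the point indicators $\delta_{\{x\}}$ (for $x \in \ob(P)$), defined by $\delta_{\{x\}}(y)=1$ if $y=x$ and $0$ otherwise, forming a basis. Writing $f = \sum_{x \in \ob(P)} f(x)\,\delta_{\{x\}}$, it therefore suffices to express each single point indicator $\delta_{\{x\}}$ as a finite $\Q$-linear combination of incidence functions on filters, and dually as one on ideals; the two decompositions of $f$ then follow by substitution and collecting terms. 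The constraint $n,m \geq 1$ is harmless, since coefficients are allowed to vanish (and in any case one may always pad a representation with $0 \cdot \delta_{P}$, noting that $P$ itself is simultaneously a filter and an ideal of $P$).

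For the filter decomposition I would use the prime filters $\vee x = \{\, y \in \ob(P) \mid x \leq y \,\}$, each of which is a filter of $P$, so that $\delta_{\vee x}$ is a (definable) incidence function. Directly from the definitions one has the identity
\[
\delta_{\vee x} \;=\; \sum_{y \geq x} \delta_{\{y\}}
\]
in $\DF(P)$, which is precisely the classical zeta transform on the finite poset $P$. It is invertible: by M\"{o}bius inversion one obtains
\[
\delta_{\{x\}} \;=\; \sum_{y \geq x} \mu_{P}(x,y)\,\delta_{\vee y},
\]
where $\mu_{P}$ is the M\"{o}bius function of $P$. (Alternatively, one can avoid $\mu_{P}$ altogether and argue by downward induction along a linear extension of $P$: for $x$ maximal one has $\vee x = \{x\}$, hence $\delta_{\{x\}} = \delta_{\vee x}$; in general $\delta_{\{x\}} = \delta_{\vee x} - \sum_{y > x}\delta_{\{y\}}$, and each $\delta_{\{y\}}$ with $y>x$ is already a finite combination of incidence functions on filters by the inductive hypothesis.) In either form, the sum is finite and every $\vee y$ lies in $\F(P)$, so this is a representation of $\delta_{\{x\}}$ of the required shape.

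The ideal decomposition is entirely dual: one replaces $\vee x$ by the prime ideal $\wedge x = \{\, y \in \ob(P) \mid y \leq x \,\} \in \I(P)$, uses $\delta_{\wedge x} = \sum_{y \leq x}\delta_{\{y\}}$, and inverts (by M\"{o}bius inversion, or by an upward induction from minimal elements) to get $\delta_{\{x\}} = \sum_{y \leq x}\mu_{P}(y,x)\,\delta_{\wedge y}$. Substituting the filter expansion of each $\delta_{\{x\}}$ into $f = \sum_{x} f(x)\delta_{\{x\}}$ produces $f = \sum_{i=1}^{n} a_{i}\delta_{A_{i}}$ with $A_{i}\in\F(P)$, and substituting the ideal expansion produces $f = \sum_{j=1}^{m} b_{j}\delta_{B_{j}}$ with $B_{j}\in\I(P)$. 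I do not anticipate a genuine obstacle here: once one records that $\DF(P)$ is the full function space on $\ob(P)$ and that the prime filters and prime ideals are honestly filters and ideals, the mathematical content is just the invertibility of the zeta transform on a finite poset, which is standard; the only care needed is the bookkeeping ensuring all the sums are finite (immediate from finiteness of $P$) and the trivial $n,m\geq 1$ convention.
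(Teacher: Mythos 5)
Your proof is correct, but it takes a different route from the paper's. You reduce everything to the point indicators $\delta_{\{x\}}$ and invert the zeta transform: $\delta_{\vee x}=\sum_{y\geq x}\delta_{\{y\}}$, hence $\delta_{\{x\}}=\sum_{y\geq x}\mu_{P}(x,y)\,\delta_{\vee y}$ by M\"obius inversion (or by your downward induction from maximal elements), and dually with the prime ideals $\wedge x$. The paper instead argues by induction on the cardinality of $P$: it removes a maximal element $x$, decomposes $f_{|P-\{x\}}$ into incidence functions on filters $A_{i}$ of $P-\{x\}$, and reassembles $f=\sum_{i}a_{i}\delta_{A_{i}\cup\{x\}}+\bigl(f(x)-\sum_{i}a_{i}\bigr)\delta_{\vee x}$; the ideal half is then obtained not by a dual argument but by the complementation identity $\delta_{Q}=\delta_{P}-\delta_{P\backslash Q}$, since $P\backslash Q$ is an ideal when $Q$ is a filter. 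Your version buys more: it uses only \emph{prime} filters and prime ideals, gives the coefficients explicitly in terms of $\mu_{P}$, and in fact directly establishes the statement the paper only records after the corollary, namely that $\{\delta_{\vee x}\}_{x}$ and $\{\delta_{\wedge x}\}_{x}$ are bases of $\DF(P)$. The paper's argument is more elementary in that it never invokes the M\"obius function (though your inductive variant also avoids it), and its complementation trick is a shorter path to the ideal decomposition than running the dual induction; the filters it produces, such as $A_{i}\cup\{x\}$, need not be prime. Minor bookkeeping in your write-up (the $n,m\geq 1$ padding, finiteness of the sums, primality of $\vee x$ and $\wedge x$ as filter and ideal) is all handled correctly.
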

\begin{proof}
We use induction on the cardinality of $P$.
When $P$ consists of a single element $x$, it is obvious that $f(x)= f(x) \delta_{\vee x}=f(x) \delta_{\wedge x}$.  
Assume that any function on $P$ has a linear form of incidence functions on filters, in the case of $P^{\sharp} \leq n-1$.
When $P^{\sharp} = n$, take a maximal element $x \in P$. 
The inductive assumption decomposes the restriction $f_{|P-\{x\}}$ as 
\[
f_{|P-\{x\}} = \sum_{i=1}^{n}a_{i}\delta_{A_{i}},
\]
for $a_{i} \in \mathbb{Q}$, $A_{i} \in \F(P-\{x\})$. 
The union $A_{i} \cup \{x\}$ is a filter of $P$ for each $i$.
The function $f$ is described as
\[
f= \sum_{i=1}^{n} a_{i} \delta_{A_{i} \cup \{x\}} + \left( f(x)-\sum_{i=1}^{n}a_{i} \right) \delta_{\vee x}.
\]
On the other hand, each incidence function $\delta_{Q}$ on a filter $Q$ is described as the linear form 
$\delta_{Q} = \delta_{P}-\delta_{P \backslash Q}$ of incidence functions on ideals, and this completes the proof.
\end{proof}

\begin{corollary}
A definable function $f$ on a finite category $C$ can be described using the following finite linear forms:
\[
f = \sum_{i=1}^{n} a_{i} \delta_{A_{i}}=\sum_{j=1}^{m}b_{j}\delta_{B_{j}},
\]
where $n,m \geq 1$; $a_{i},b_{j} \in \mathbb{Q}$; $A_{i} \in \F(C)$; and $B_{i} \in \I(C)$.
\end{corollary}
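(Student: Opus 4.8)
The plan is to reduce the statement to the preceding Lemma for posets, by passing through the canonical functor $\pi : C \to \Po(C)$. First I would apply the third bullet of Remark \ref{remark} to the definable function $f : C \to \Q$: since $\Q$ is a poset, in which a reflexible pair consists only of equal elements, the diagram there exhibits $f$ as a composite $f = \tilde f \circ \pi$ for a (unique) function $\tilde f : \Po(C) \to \Q$. The quotient $\Po(C)$ is a finite poset, being the quotient of the finite set $\ob(C)$ by an equivalence relation, so the Lemma applies to $\tilde f$ and produces
\[
\tilde f = \sum_{i=1}^{n} a_{i}\,\delta_{A_{i}'} = \sum_{j=1}^{m} b_{j}\,\delta_{B_{j}'},
\]
with $n,m \geq 1$, $a_{i},b_{j}\in\Q$, $A_{i}'\in\F(\Po(C))$, and $B_{j}'\in\I(\Po(C))$.

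The second step is to pull these two decompositions back along $\pi$. For any subset $S$ of $\ob(\Po(C))$ one has the pointwise identity $\delta_{S}\circ\pi = \delta_{\pi^{-1}(S)}$ of functions on $\ob(C)$, where $\pi^{-1}(S)$ is regarded as the full subcategory of $C$ that it spans. Composing the displayed equations with $\pi$ and using that precomposition with $\pi$ is $\Q$-linear then gives
\[
f = \sum_{i=1}^{n} a_{i}\,\delta_{\pi^{-1}(A_{i}')} = \sum_{j=1}^{m} b_{j}\,\delta_{\pi^{-1}(B_{j}')}.
\]
Finally I would check that each $A_{i} := \pi^{-1}(A_{i}')$ is a filter of $C$ and each $B_{j} := \pi^{-1}(B_{j}')$ is an ideal of $C$: this is exactly the first bullet of Remark \ref{remark}, since $\pi$ is surjective on objects, so $\pi(\pi^{-1}(A_{i}')) = A_{i}'$ is a filter of $\Po(C)$, which forces $\pi^{-1}(A_{i}')$ to be a filter of $C$ (and dually for the ideals $B_{j}$). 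Setting $A_{i}$ and $B_{j}$ as above then yields the claimed forms, with the same bounds $n,m\geq 1$ inherited from the Lemma.

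There is no genuinely hard step here; the mathematical content lies entirely in the Lemma, and the passage from posets to categories is bookkeeping. The one point that warrants attention is the factorization $f = \tilde f\circ\pi$: one must note that "definable with target $\Q$" means precisely "constant on reflexibility classes", which is exactly what makes $\tilde f$ well defined on $\Po(C)$. Once this is observed, the rest of the argument is formal.
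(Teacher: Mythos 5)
Your proposal is correct and follows essentially the same route as the paper: factor $f$ through the canonical projection $\pi : C \to \Po(C)$, apply the Lemma to the induced function on the finite poset $\Po(C)$, and pull the decomposition back, using that $\pi^{-1}$ of a filter (ideal) of $\Po(C)$ is a filter (ideal) of $C$. Your write-up merely makes explicit two points the paper leaves implicit, namely the well-definedness of $\tilde f$ and the identity $\delta_{S}\circ\pi=\delta_{\pi^{-1}(S)}$.
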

\begin{proof}
The induced function $\pi(f)$ on the poset $\Po(C)$ has the desired liner forms:
\[
\pi(f) = \sum_{i=1}^{n} a_{i} \delta_{A_{i}}=\sum_{j=1}^{m}b_{j}\delta_{B_{j}},
\]
where $n,m \geq 1$; $a_{i},b_{j} \in \mathbb{Q}$; $A_{i} \in \F(\Po(C))$; and $B_{j} \in \I(\Po(C))$.
For the canonical projection $\pi : C \to \Po(C)$, the inverse images $\pi^{-1}(A_{i})$ are filters and $\pi^{-1}(B_{i})$ are ideals of $C$.
We can describe the function $f$ as
\[
f = \sum_{i=1}^{n} a_{i} \delta_{\pi^{-1}(A_{i})}=\sum_{j=1}^{m}b_{j}\delta_{\pi^{-1}(B_{j})}.
\]
\end{proof}

For a category $C$, the corollary above shows the following equalities:
\[
\mathrm{DF}(C)=\left\{ \sum_{i} a_{i}\delta_{A_{i}} \mid a_{i} \in \mathbb{Q}, A_{i} \in \F(C)\right\} 
= \left\{ \sum_{j} b_{j}\delta_{B_{j}} \mid b_{j} \in \mathbb{Q}, B_{j} \in \I(C)\right\}
\]

A filter $A$ of a finite poset $P$ can be written as the union of prime filters $\vee x$.
Hence, the vector space $\DF(P)$ of rational-valued functions on $P$ has two bases consisting of $\delta_{\vee x}$ and $\delta_{\wedge x}$ for each $x \in P$. 

In the case of a finite category $C$, we choose and fix objects $x_{1},\cdots,x_{k}$ such that $\Po(C)=\{[x_{1}],\cdots,[x_{k}]\}$.
The vector space $\DF(C)$ of definable functions on $C$ has two bases consisting of $\delta_{\vee x_{i}}$ and $\delta_{\wedge x_{i}}$ for $i=1,\cdots,k$.

\begin{definition}
A finite category $C$ is called {\em measurable} if each filter and ideal of $C$ has Euler characteristic. 
\end{definition}

For example, finite posets, acyclic categories, groups, and groupoids are measurable, since any full subcategory has Euler characteristic.

\begin{definition}
For a measurable category $C$, the {\em Euler integration on filters} is the linear map
\[
\int_{C}^{\F}(-)d \chi : \DF(C) \ra \Q,
\]
which sends $\delta_{\vee x}$ to $\chi(\vee x)$. Dually, the {\em Euler integration on ideals} is the linear map
\[
\int_{C}^{\I}(-)d \chi : \DF(C) \ra \Q,
\]
that sends $\delta_{\wedge x}$ to $\chi(\wedge x)$. 
\end{definition}

We should note that, in general, $\chi(D) \neq \chi(C)-\chi(C \backslash D)$ for a filter (an ideal) $D$ of $C$. 
This implies that
\[
\int_{C}^{\F} f d \chi \neq \int_{C}^{\I} f d \chi,
\]
for $f=\delta_{D}=\delta_{C}-\delta_{C \backslash D} \in \DF(C)$. 

\begin{remark}
We now have two kinds of integration via Euler characteristics of categories.
For a measurable category $C$, the duality between the filters and ideals implies that $C_{\F}=C^{\op}_{\I}$ and
\[
\int_{C}^{\F} f d \chi = \int_{C^{\op}}^{\I} f d \chi,
\]
for any $f \in \mathrm{DF}(C)=\mathrm{DF}(C^{\op})$. 
These two integrals are not equal, but have duality with the opposite category.
Henceforward, we shall only consider the case of Euler integration on filters.
We call it {\em Euler integration} for short, and $\int^{\F}_{C}$ will be simply denoted by $\int_{C}$. 
Euler integration on ideals satisfies the dual properties of that described below.
\end{remark}

\begin{lemma}
Euler integration does not depend on the linear representation of the definable functions with respect to the incidence functions on the filters.
That is, if a definable function $f$ can be described as $\sum_{i=1}^{n} a_{i}\delta_{A_{i}}$ for some $a_{i} \in \mathbb{Q}$ and $A_{i} \in \F(C)$, then we have 
\[
\int_{C} f d \chi = \sum_{i=1}^{n}a_{i}\chi(A_{i}).
\]
\end{lemma}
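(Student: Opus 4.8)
The plan is to reduce the lemma to the single assertion that $\int_{C}\delta_{A}\,d\chi=\chi(A)$ for every filter $A\in\F(C)$. Granting this, if a definable function is written as $f=\sum_{i=1}^{n}a_{i}\delta_{A_{i}}$ with $a_{i}\in\Q$ and $A_{i}\in\F(C)$, then linearity of $\int_{C}$ gives at once $\int_{C}f\,d\chi=\sum_{i}a_{i}\int_{C}\delta_{A_{i}}\,d\chi=\sum_{i}a_{i}\chi(A_{i})$. Observe that $\int_{C}$ is already a well-defined linear map, since the functions $\delta_{\vee x_{1}},\dots,\delta_{\vee x_{k}}$ form a basis of $\DF(C)$; the only point in question is its value on the incidence functions $\delta_{A}$, which are in general not among these basis vectors.

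To prove the assertion I would induct on $n=\pi(A)^{\sharp}$, the number of reflexibility classes in $A$. If $n=0$ then $A$ is the empty category and both sides are $0$. For the inductive step, pick a class $[z]$ that is minimal in the finite poset $\pi(A)\subseteq\Po(C)$ and let $x_{i}$ be its chosen representative among $x_{1},\dots,x_{k}$, so that $\vee z=\vee x_{i}$; since $A$ is a filter, $[z]\subseteq\vee z\subseteq A$. Put $A'=A\setminus[z]$. A direct check on underlying object sets gives $A=A'\cup\vee z$ and $A'\cap\vee z=\vee z\setminus[z]$, and hence the pointwise identity $\delta_{A}=\delta_{A'}+\delta_{\vee z}-\delta_{\vee z\setminus[z]}$.

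The step I expect to require the most care is verifying that $A'$ and $\vee z\setminus[z]$ are again filters of $C$: if $x\in A'$ and $C(x,y)\neq\emptyset$ then $y\in A$, and $y\notin[z]$, for otherwise $[x]<[z]$ in $\pi(A)$, contradicting the minimality of $[z]$; the argument for $\vee z\setminus[z]$ is similar, using that $[z]\le[x]$ whenever $x\in\vee z$. Because $C$ is measurable, the filters $A$, $A'$, $\vee z$ and $A'\cap\vee z=\vee z\setminus[z]$ all have Euler characteristics, so Theorem~\ref{inclusion-exclusion} applied to $A'$ and $\vee z$ yields
\[
\chi(A)=\chi(A')+\chi(\vee z)-\chi(\vee z\setminus[z]).
\]

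To conclude, note that $\pi(A')$ has $n-1$ elements and $\pi(\vee z\setminus[z])$ has $\pi(\vee z)^{\sharp}-1\le n-1$ elements (since $\vee z\subseteq A$), so the inductive hypothesis applies to both and gives $\int_{C}\delta_{A'}\,d\chi=\chi(A')$ and $\int_{C}\delta_{\vee z\setminus[z]}\,d\chi=\chi(\vee z\setminus[z])$; moreover $\int_{C}\delta_{\vee z}\,d\chi=\chi(\vee z)$ directly from the definition of $\int_{C}$, as $\vee z=\vee x_{i}$. Applying the linear functional $\int_{C}$ to the pointwise identity of the previous paragraph and substituting these values recovers exactly the right-hand side of the displayed inclusion-exclusion formula, so $\int_{C}\delta_{A}\,d\chi=\chi(A)$, completing the induction. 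It is worth remarking that one cannot simply transfer this computation to $\Po(C)$: the relevant combinatorics of filters does descend along $\pi$, but the Euler characteristics $\chi(\vee z)$ do not, so the induction genuinely takes place inside $C$.
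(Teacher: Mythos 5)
Your argument is correct, but it takes a genuinely different route from the paper. You first reduce, by linearity of $\int_{C}$ on $\DF(C)$, to the single identity $\int_{C}\delta_{A}\,d\chi=\chi(A)$ for one filter $A\in\F(C)$, and you prove that by induction on the number of reflexibility classes $\pi(A)^{\sharp}$: peel off a minimal class $[z]$, write $A=(A\setminus[z])\cup(\vee z)$ with intersection $\vee z\setminus[z]$, check that all four pieces are filters (your minimality argument for this is sound, since $x\notin[z]$ rules out $[x]=[z]$), and then convert the indicator identity into the Euler-characteristic identity by invoking Theorem~\ref{inclusion-exclusion}, with the prime filter $\vee z=\vee x_{i}$ handled by the definition of $\int_{C}$. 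The paper instead keeps the whole representation $f=\sum_{i}a_{i}\delta_{A_{i}}$ and proves the more refined clipping statement $\int_{C}f_{B}\,d\chi=\sum_{i}a_{i}\chi(A_{i}\cap B)$ for every filter $B$, by induction on the number of objects of $B$, removing a minimal object $x$ of $B$ and rearranging the linear representation directly; it never cites Theorem~\ref{inclusion-exclusion} in this proof, using only the definitional values on prime filters. What your approach buys is economy and transparency: it isolates the one identity that carries the content, it inducts on the natural parameter for non-poset categories (classes rather than objects), and it delegates all the Euler-characteristic arithmetic to the already-established inclusion--exclusion theorem; the cost is that your lemma is logically dependent on that theorem, whereas the paper's bookkeeping argument is self-contained (and yields the intermediate clipping formula as a by-product). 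Both are valid proofs of the stated lemma.
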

\begin{proof}
It suffices to show that
\begin{equation}
\begin{split}\label{independent}
\int_{C} f_{B} d \chi = \sum_{i=1}^{n}a_{i}\chi(A_{i} \cap B)
\end{split}
\end{equation}
for any filter $B$ of $C$. We use induction on the cardinality of the set of objects of $B$. 
When $B$ is a nonempty minimal filter, it is the prime filter $\vee b$ generated from some maximal object $b$. Any pair of objects in $B$ are reflexible, and the clipping $f_{B}=f(b)\delta_{\vee b}$ is constant on $B$.
Then, the following equality holds:
\[
\int_{C} f_{B} d \chi = f(b) \chi(\vee b) = \sum_{i;b \in A_{i}}a_{i}\chi(\vee b)=\sum_{i=1}^{n} a_{i} \chi(A_{i} \cap (\vee b)),
\]
since $A_{i} \cap (\vee b)=\vee b$ if $b \in A_{i}$, and $A_{i} \cap (\vee b)=\emptyset$ otherwise.

Next, assume that we have the equality stated in (\ref{independent}) for the clipping on $B$ of any definable function, when $B^{\sharp} \leq n-1$. 
When $B^{\sharp}=n$, choose a minimal object $x$ of $B$.
If the filter $A_{i}$ contains the object $x$, then $A_{i} \cap B=\vee x$.
The clipping function $f_{B}$ can be represented as
\[
f_{B}= f(x)\delta_{\vee x} +f_{B\backslash (\wedge x)}-f(x)\delta_{(\vee x) \cap (B\backslash (\wedge x))}.
\]
The Euler integration of the first term is obtained by the definition, and the second and third terms are obtained by the inductive assumption.
We can write the above terms as follows:
\[
f(x)\delta_{\vee x} = \sum_{i;x \in A_{i}}a_{i}\delta_{\vee x},
\]
\[
f_{B \backslash (\wedge x)} = \sum_{i ; x \not \in A_{i}} a_{i} \delta_{A_{i} \cap B}+\sum_{i; x \in A_{i}} a_{i} \delta_{A_{i} \backslash (\wedge x)},
\]
and
\[ 
f(x)\delta_{(\vee x) \cap (B\backslash (\wedge x))}=  \sum_{i;x \in A_{i}}a_{i}\delta_{(\vee x) \cap (B\backslash (\wedge x))} = \sum_{i;x \in A_{i}}a_{i}\delta_{A_{i} \backslash (\wedge x)}.
\]
By applying the Euler integration to $f_{B}$, we obtain the following equality:
\begin{equation}
\begin{split}
\int_{C} f_{B} d\chi &= \sum_{i; x \in A_{i}} a_{i}\chi(\vee x) + \sum_{i; x \not \in A_{i}} a_{i} \chi(A_{i} \cap B) \\ \notag
&=\sum_{i; x \in A_{i}} a_{i}\chi(A_{i} \cap B)+ \sum_{i; x \not \in A_{i}} a_{i} \chi(A_{i} \cap B) \\
&= \sum_{i=1}^{n} a_{i} \chi(A_{i} \cap B).
\end{split}
\end{equation}
The result corresponds to the case in which $B=C$.
\end{proof}

\begin{proposition}\label{value}
Let $f$ be a definable function on a measurable category $C$.
If $C$ has a terminal object $x$, then we have 
\[
\int_{C} f d \chi = f(x).
\]
\end{proposition}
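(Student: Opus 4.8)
The plan is to verify the identity on a basis of $\DF(C)$ and then extend by linearity. Both the Euler integration $\int_{C}(-)\,d\chi$ and the evaluation map $\mathrm{ev}_{x}\colon \DF(C) \to \Q$, $f \mapsto f(x)$, are $\Q$-linear, and $\DF(C)$ is spanned by the incidence functions $\delta_{\vee x_{i}}$ on prime filters (as recorded just before the definition of measurable categories). So it will suffice to show that $\int_{C}\delta_{\vee x_{i}}\,d\chi = \delta_{\vee x_{i}}(x)$ for each $i$.

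The crucial point I would use is that the terminal object $x$ lies in every prime filter $\vee y$: since $x$ is terminal, $C(y,x) \neq \emptyset$, hence $x \in \vee y = \{\, z \mid C(y,z) \neq \emptyset \,\}$. This immediately gives $\delta_{\vee x_{i}}(x) = 1$ for every $i$. For the other side, I would observe that $x$ is terminal not only in $C$ but also in the full subcategory $\vee y$ (being terminal is inherited by any full subcategory containing $x$). Since $C$ is measurable, $\vee y$ has an Euler characteristic, so Proposition \ref{contractible} yields $\chi(\vee y) = 1$; by the definition of Euler integration on filters this says $\int_{C}\delta_{\vee x_{i}}\,d\chi = \chi(\vee x_{i}) = 1$. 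Thus both linear maps take the value $1$ on every basis element, and linearity closes the argument.

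I do not anticipate a genuine obstacle here; the only step needing a little care is the reduction to a basis, which rests on the corollary that every definable function is a linear combination of incidence functions on filters (together with the fact that a filter of a finite poset is a union of prime filters). An equivalent route that sidesteps bases is to fix a representation $f = \sum_{i} a_{i}\delta_{A_{i}}$ with $A_{i} \in \F(C)$ and apply the preceding lemma to get $\int_{C}f\,d\chi = \sum_{i} a_{i}\chi(A_{i})$; every nonempty filter contains $x$ and hence has Euler characteristic $1$ by Proposition \ref{contractible}, while the empty filter has Euler characteristic $0$, so $\int_{C}f\,d\chi = \sum_{i\,:\,x\in A_{i}} a_{i} = f(x)$. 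Either way, the core input is Proposition \ref{contractible} applied to the (prime) filters, each of which acquires $x$ as a terminal object.
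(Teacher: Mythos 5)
Your argument is correct, but it takes a genuinely different route from the paper. The paper proves the proposition by a fresh induction on the cardinality of a filter $B$, establishing $\int_{C} f_{B}\, d\chi = f(x)$ via the clipping decomposition $f_{B} = f_{B \backslash (\wedge b)} + f(b)\delta_{\vee b} - f(b)\delta_{(B \backslash (\wedge b)) \cap (\vee b)}$ at a minimal object $b$ of $B$, with the base case handled by Proposition \ref{contractible} applied to the prime filter generated by the terminal object. You instead exploit linearity directly: either compare $\int_{C}(-)\,d\chi$ with the evaluation map $\mathrm{ev}_{x}$ on the basis $\{\delta_{\vee x_{i}}\}$, or invoke the preceding lemma (representation-independence, $\int_{C} f\, d\chi = \sum_{i} a_{i}\chi(A_{i})$) together with the key observation that every nonempty filter of $C$ contains $x$ and inherits it as a terminal object, hence has Euler characteristic $1$ by Proposition \ref{contractible}, while the empty filter contributes $0$; thus $\int_{C} f\, d\chi = \sum_{i\,:\,x \in A_{i}} a_{i} = f(x)$. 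Your observation that $x \in \vee x_{i}$ (so $\delta_{\vee x_{i}}(x)=1$) and that measurability guarantees each filter has an Euler characteristic are exactly the points that make this work, so there is no gap. What your approach buys is brevity and transparency: it reuses the already-proven representation lemma instead of repeating an inductive clipping argument, and it isolates the single structural fact doing the work (every nonempty filter acquires $x$ as a terminal object). The paper's induction, on the other hand, is self-contained in the sense that it mirrors the proof pattern of the surrounding lemmas and does not even need the representation lemma explicitly, which keeps the logical dependencies of that section uniform.
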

\begin{proof}
It is sufficient to prove that 
\begin{equation}
\begin{split}\label{terminal}
\int_{C} f_{B} d \chi = f(x)
\end{split}
\end{equation}
for any filter $B$ of $C$. Note that every filter of a category includes terminal objects, if they exist. 
We use induction on the cardinality of $B$ to prove this. 
When $B$ is a nonempty minimal filter, it is the prime filter $\vee x$ generated from the terminal object $x$. For a definable function $f$ on $C$, we have
\[
\int_{C} f_{B} d \chi = \int_{C} f(x) \delta_{\vee x} = f(x) \chi(\vee x) = f(x),
\]
by Proposition \ref{contractible}.
We assume that our desired equality, stated in (\ref{terminal}), holds for any definable function if $B^{\sharp} \leq n-1$. 
When $B^{\sharp} =n$, we take a minimal object $b$ of $B$.
For a definable function $f$ on $C$, the clipping function $f_{B}$ can be represented as
\[
f_{B} = f_{B \backslash (\wedge b)} + f(b)\delta_{\vee b} - f(b) \delta_{(B \backslash (\wedge b)) \cap (\vee b)}.
\]
Here, each of $B \backslash (\wedge b)$, $\vee b$, and $(B \backslash (\wedge b)) \cap (\vee b)$ is a filter of $C$.
The inductive assumption leads to the following equality:
\[
\int_{C} f_{B} d \chi = f(x)+f(b)-f(b)=f(x).
\]
The result corresponds to the case in which $B=C$.
\end{proof}

Let $f$ be a definable function on a measurable category $C$, and let $B$ be a measurable full subcategory of $C$. For simplicity, the Euler integration $\int_{B} \left(f_{|B}\right) d \chi$ of the restriction $f_{|B}$ is denoted by $\int_{B} f d\chi$.

\begin{theorem}
Let $f$ be a definable function on a measurable category $C$, 
and let the category $C$ be the union $A \cup B$ of two full subcategories $A$ and $B$ of $C$.
If both $A$ and $B$ are either filters or ideals, then we have
\[
\int_{C} f d\chi = \int_{A} f d\chi + \int_{B} f d\chi -\int_{A \cap B} f d \chi.
\]
\end{theorem}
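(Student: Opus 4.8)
The plan is to peel the statement down to a single incidence function on a filter of $C$ and then feed it into the inclusion--exclusion formula of Theorem~\ref{inclusion-exclusion}; everything else is linearity and elementary closure checks for filters and ideals.

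First I would observe that the asserted identity is $\Q$-linear in $f\in\DF(C)$: restriction $\DF(C)\to\DF(A)$ is linear, and Euler integration is a linear map, so each of $g\mapsto\int_{C}g\,d\chi$, $g\mapsto\int_{A}g\,d\chi$, $g\mapsto\int_{B}g\,d\chi$, $g\mapsto\int_{A\cap B}g\,d\chi$ is linear. By the corollary on decomposing definable functions, $\DF(C)$ is spanned by the incidence functions $\delta_{D}$ with $D\in\F(C)$, so it suffices to verify the identity for $f=\delta_{D}$, $D\in\F(C)$. It is worth noting that this one reduction covers both the ``filters'' case and the ``ideals'' case simultaneously, since we always expand $f$ through filters of $C$.

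Next, fix $D\in\F(C)$, and let $E$ be any full subcategory of $C$ which is a filter or an ideal of $C$ (I will use $E\in\{A,B,A\cap B,C\}$, recalling that $A\cap B$ is again a filter, resp.\ an ideal, of $C$). Then $(\delta_{D})_{|E}$ is precisely the incidence function $\delta_{D\cap E}$ on $E$, and $D\cap E$ is a filter of $E$: if $x\in\ob(D\cap E)$ and $E(x,y)\neq\emptyset$, then $y\in\ob(E)$ automatically — a non-empty hom-set of $E$ pins both of its endpoints into $\ob(E)$ — and $y\in\ob(D)$ because $D$ is a filter of $C$ and $C(x,y)=E(x,y)\neq\emptyset$. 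Hence, by the lemma on independence of the linear representation, $\int_{E}\delta_{D}\,d\chi=\chi(D\cap E)$. So it remains to prove
\[
\chi(D)=\chi(D\cap A)+\chi(D\cap B)-\chi(D\cap A\cap B).
\]
For this I would work inside the category $D$: the same kind of check shows that $D\cap A$ and $D\cap B$ are both filters of $D$ when $A,B\in\F(C)$ and both ideals of $D$ when $A,B\in\I(C)$, while $(D\cap A)\cup(D\cap B)=D\cap(A\cup B)=D$ and $(D\cap A)\cap(D\cap B)=D\cap A\cap B$. Applying Theorem~\ref{inclusion-exclusion} to the pair $D\cap A$, $D\cap B$ inside $D$ then gives exactly the displayed equation, and the theorem follows.

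The main obstacle is not the algebra but making sure everything in sight is legitimate: one must check that each intersection appearing above is again a filter or an ideal of the correct ambient category — this rests entirely on the remark that a non-empty hom-set already determines both of its endpoints as objects, plus the defining closure property of $D$ — and, crucially, that the four categories $D,D\cap A,D\cap B,D\cap A\cap B$ all have Euler characteristic, so that the restricted integrals and Theorem~\ref{inclusion-exclusion} genuinely apply. In the ``filters'' case this is immediate, since all of these are then filters of the measurable category $C$; in the ``ideals'' case one needs the analogous fact that the relevant subcategories inherit Euler characteristics, which I would record (or have recorded) alongside the definition of measurability.
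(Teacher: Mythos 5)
Your proposal is correct and follows essentially the same route as the paper: reduce by linearity to $f=\delta_{D}$ with $D\in\F(C)$, identify $\int_{X}\delta_{D}\,d\chi=\chi(D\cap X)$ since $D\cap X$ is a filter of $X$, and then apply Theorem~\ref{inclusion-exclusion} to $D\cap A$ and $D\cap B$ viewed inside $D$ (as ideals of $D$ in the ideals case, filters in the filters case). The only difference is that you spell out the closure checks and the existence of the relevant Euler characteristics, which the paper leaves implicit.
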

\begin{proof}
We will focus on the case in which both $A$ and $B$ are ideals.
It suffices to show the case of $f=\delta_{D}$ for a filter $D$ of $C$. 
Each $D \cap X$ is an ideal of $D$ and a filter of $X$, for $X=A$, $B$, and $A \cap B$.
Theorem \ref{inclusion-exclusion} induces the following equality:
\begin{equation}
\begin{split}
\int_{C} \delta_{D} d\chi &= \chi(D) \\ \notag
&= \chi(D \cap A) + \chi(D \cap B) - \chi(D \cap A \cap B) \\
&=\int_{A} \delta_{D} d\chi + \int_{B} \delta_{D} d\chi -\int_{A \cap B} \delta_{D} d \chi.
\end{split}
\end{equation}
The case in which $A$ and $B$ are filters can be shown similarly.
\end{proof}

\begin{definition}
Let $C$ and $D$ be categories. A map $F : C \to D$ is called {\em measurable} when the inverse image $F^{-1}(-)$ preserves filters and ideals.
\end{definition}

For example, the underlying map on objects of a functor is measurable.

\begin{lemma}
A measurable map $F : C \to D$ is definable.
\end{lemma}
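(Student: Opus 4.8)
The goal is to show that a measurable map $F : C \to D$ is definable, i.e., that $F$ preserves the reflexible relation: $x \sim y$ in $C$ implies $F(x) \sim F(y)$ in $D$. The plan is to argue by contraposition, exploiting the characterization of definability in terms of the posets $\Po(C)$ and $\Po(D)$ from Remark \ref{remark}, together with the fact (also in Remark \ref{remark}) that filters and ideals of $C$ correspond to filters and ideals of $\Po(C)$ under the projection $\pi$.

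First I would reduce to posets. Since $F$ is definable if and only if it factors through a (not necessarily order-preserving) map $\tilde F : \Po(C) \to \Po(D)$, and since filters and ideals of $C$ are exactly the $\pi$-preimages of filters and ideals of $\Po(C)$, it is harmless to assume $C$ and $D$ are posets and to show directly that $F$ is order-compatible in the weak sense needed — actually, for posets every $\Q$-valued function is definable, so the real content is just that $F$ descends to posets, which for posets is automatic. So the crux is genuinely the statement for general categories: I want to show $x \sim y$ forces $F(x) \sim F(y)$.

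Suppose for contradiction that $x \sim y$ in $C$ but $F(x) \not\sim F(y)$ in $D$; without loss of generality $D(F(y), F(x)) = \emptyset$, so $[F(x)] \not\leq [F(y)]$ in $\Po(D)$. Consider the prime ideal $\wedge F(x)$ of $D$ (the full subcategory on $\{z \mid D(z, F(x)) \neq \emptyset\}$), which is indeed an ideal of $D$. By measurability of $F$, the preimage $F^{-1}(\wedge F(x))$ is an ideal of $C$. Now $x \in F^{-1}(\wedge F(x))$ since $F(x) \in \wedge F(x)$ (the identity witnesses $D(F(x),F(x)) \neq \emptyset$), and because $C(x,y) \neq \emptyset$ and the preimage is an ideal — wait, an ideal is closed under domains of arrows into it, so from $y$ having an arrow $y \to x$... let me instead use that $x \sim y$ and ideals are closed under $\sim$: indeed if $I$ is an ideal and $x \in I$ with $C(y,x)\neq\emptyset$ then $y \in I$. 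Since $C(y,x) \neq \emptyset$ and $x \in F^{-1}(\wedge F(x))$, we get $y \in F^{-1}(\wedge F(x))$, i.e., $F(y) \in \wedge F(x)$, i.e., $D(F(y), F(x)) \neq \emptyset$ — contradicting our assumption. Symmetrically the other direction gives $D(F(x),F(y))\neq\emptyset$, so $F(x) \sim F(y)$.

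The main obstacle, or rather the main thing to get right, is the bookkeeping of which closure direction a filter versus an ideal enjoys and matching it to the correct prime (co)ideal of $D$: one must pick $\wedge F(x)$ (an ideal) to capture the relation $D(-, F(x)) \neq \emptyset$ and then use ideal-closure of $F^{-1}(\wedge F(x))$ under the arrow $C(y,x)$, and dually pick the prime filter $\vee F(x)$ with a filter preimage to get the reverse arrow. Once the two symmetric cases are assembled, reflexibility of $F(x)$ and $F(y)$ follows, completing the proof. I would present it compactly in two short paragraphs: one fixing notation and the contrapositive setup, one running the ideal argument and remarking that the filter argument is dual.
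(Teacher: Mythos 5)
Your argument is correct and is essentially the paper's own proof in dual form: where the paper pulls back the prime filters $\vee F(a)$, $\vee F(b)$ and uses filter-closure, you pull back the prime ideals $\wedge F(x)$, $\wedge F(y)$ and use ideal-closure, which works equally well since measurability preserves both. The contradiction framing and the opening ``reduce to posets'' digression are unnecessary (the latter is circular, as you yourself note), but the core two-step closure argument is exactly right.
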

\begin{proof}
Suppose that two objects $a$ and $b$ of $C$ are reflexible.
The inverse image $F^{-1}(\vee F(b))$ of the filter generated from $F(b)$ in $D$ is a filter in $C$.
The object $b$ belongs to this filter and so does $a$. This implies that $F(a)$ belongs to $\vee F(b)$.
Dually, the filter $F^{-1}(\vee F(a))$ contains the object $b$, and $F(b)$ belongs to $\vee F(a)$.
It follows that $F(a)$ and $F(b)$ are reflexible.
\end{proof}

At the rest of this section, we will examine the notion of {\em pushforwards} and Fubini theorem, introduced in Definition 2.7 and Theorem 2.8 of \cite{BG09}.
They used the inverse image of each point of $Y$ as the integral range, 
in order to define the pushforward of a definable map $X \to Y$. 
However, we use the inverse image of the ideal generated by each point as integral range.

\begin{definition}
Let $F : C \to D$ be a measurable map between measurable categories $C$ and $D$.
The {\em pushforward} of $F$ is the homomorphism $F_{\ast} : \DF(C) \to \DF(D)$ defined by
\[
F_{\ast}f(d) = \int_{F^{-1}(\wedge d)} f d \chi
\]
for $f \in \DF(C)$ and $d \in \ob(D)$.
\end{definition}

We need to verify that the pushforward described above is well-defined.

\begin{lemma}
For a measurable map $F: C \to D$ and a definable function $f \in \DF(C)$, 
the pushforward $F_{\ast}f$ is definable.
\end{lemma}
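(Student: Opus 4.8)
The plan is to show that $F_\ast f$ respects the reflexible relation on $D$, using the characterization from Remark \ref{remark}: a map is definable if and only if it factors through the canonical projections to the associated posets. So suppose $d_1 \sim d_2$ in $D$, i.e.\ $D(d_1,d_2) \neq \emptyset$ and $D(d_2,d_1) \neq \emptyset$. First I would observe that this forces $\wedge d_1 = \wedge d_2$ as full subcategories of $D$: if $y \in \wedge d_1$, then $D(y,d_1) \neq \emptyset$, and composing with a morphism $d_1 \to d_2$ gives $D(y,d_2) \neq \emptyset$, so $y \in \wedge d_2$, and symmetrically. Hence the integral ranges $F^{-1}(\wedge d_1)$ and $F^{-1}(\wedge d_2)$ are literally the same filter of $C$ (they are filters since $F$ is measurable and $\wedge d_i$ is an ideal of $D$), so
\[
F_\ast f(d_1) = \int_{F^{-1}(\wedge d_1)} f\, d\chi = \int_{F^{-1}(\wedge d_2)} f\, d\chi = F_\ast f(d_2).
\]

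Then I would note that this is exactly the condition for definability: a definable function sends a reflexible pair to the same value, and conversely any function on $D$ that is constant on reflexibility classes is definable (it induces a well-defined $\tilde{f}$ on $\Po(D)$ making the square in Remark \ref{remark} commute). Since $F_\ast f$ takes values in $\Q$, which we treat as a totally ordered set, and since we have just shown $F_\ast f(d_1) = F_\ast f(d_2)$ whenever $d_1 \sim d_2$, the map $F_\ast f : D \to \Q$ is definable, so $F_\ast f \in \DF(D)$ as claimed. One should also check that $F_\ast$ is $\Q$-linear, which is immediate from linearity of Euler integration, so $F_\ast$ is a genuine homomorphism of vector spaces.

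The only subtlety worth spelling out is the claim $\wedge d_1 = \wedge d_2$ when $d_1 \sim d_2$; once that is in place the rest is formal. It is tempting to worry that one needs $F^{-1}(\wedge d_1)$ to be measurable (so that the integral makes sense) — but that is guaranteed because $C$ is a measurable category, so every filter of $C$, in particular $F^{-1}(\wedge d_1)$, has an Euler characteristic and the restriction of $f$ to it is still definable. So there is no real obstacle here; the lemma is essentially a bookkeeping consequence of the definition of $\wedge d$ together with the observation that reflexible objects generate the same prime ideal.
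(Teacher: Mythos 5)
Your argument is correct and follows the same route as the paper's own proof: reflexible objects $d_1 \sim d_2$ satisfy $\wedge d_1 = \wedge d_2$, so the two integrals defining $F_\ast f(d_1)$ and $F_\ast f(d_2)$ are taken over the same filter $F^{-1}(\wedge d_1)=F^{-1}(\wedge d_2)$ and coincide. You merely spell out the composition argument for $\wedge d_1 = \wedge d_2$, which the paper leaves implicit.
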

\begin{proof}
Suppose that two objects $a$ and $b$ of $D$ are reflexible.
We then have $\wedge a = \wedge b$, and
\[
F_{\ast}f(a) = \int_{F^{-1}(\wedge a)} f d \chi= \int_{F^{-1}(\wedge b)} f d \chi=F_{\ast}f(b).
\]
\end{proof}

\begin{proposition}\label{functorial}
Let $F : C \to D$ and $G : D \to E$ be two measurable map for measurable categories $C$, $D$, and $E$.
If $D$ is a poset, the pushforward is compatible with the composition, i.e.,
\[
(G \circ F)_{\ast} = G_{\ast} \circ F_{\ast}.
\]
\end{proposition}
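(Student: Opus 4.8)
The plan is to peel off the definitions until the statement becomes the $E=\ast$ case of the pushforward, and to prove that case by induction on the number of objects of the target poset. For $e\in\ob(E)$ put $I_e:=G^{-1}(\wedge e)$; since $G$ is measurable $I_e$ is an ideal of $D$, and since $D$ is a poset $I_e$ is itself a finite poset. Because $(G\circ F)^{-1}(\wedge e)=F^{-1}(I_e)$, evaluating the definable functions $(G\circ F)_{\ast}f$ and $(G_{\ast}\circ F_{\ast})f$ at $e$ turns the desired equality into
\[
\int_{F^{-1}(I_e)}f\,d\chi \;=\; \int_{I_e}(F_{\ast}f)|_{I_e}\,d\chi .
\]

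Before attacking this I would isolate two elementary observations about an ideal $I$ of $D$. First, for $d\in I$ the prime ideal $\wedge d$ (formed in $D$) is contained in $I$ and coincides with the prime ideal of $d$ formed in $I$; consequently the restriction $F|_{F^{-1}(I)}\colon F^{-1}(I)\to I$ satisfies $\bigl(F|_{F^{-1}(I)}\bigr)^{-1}(\wedge d)=F^{-1}(\wedge d)$, whence $\bigl(F|_{F^{-1}(I)}\bigr)_{\ast}\bigl(f|_{F^{-1}(I)}\bigr)=(F_{\ast}f)|_{I}$. Second, $F|_{F^{-1}(I)}$ is again measurable: an ideal of $I$ is an ideal of $D$, hence pulls back to an ideal of $C$ lying inside $F^{-1}(I)$, hence to an ideal of $F^{-1}(I)$; and if $K$ is a filter of $I$ then $I\setminus K$ is an ideal of $I$, hence of $D$, whose preimage is an ideal of $F^{-1}(I)$ with complement $F^{-1}(K)$, which is therefore a filter. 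Granting these, the displayed identity is exactly the following core claim applied to $F|_{F^{-1}(I_e)}$: \emph{for every measurable map $F\colon C\to D$ between measurable categories with $D$ a poset and every $f\in\DF(C)$, one has $\int_C f\,d\chi=\int_D F_{\ast}f\,d\chi$.}

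To prove the core claim I would induct on $\#\ob(D)$. If $\ob(D)=\emptyset$ both sides vanish. If $D$ has a maximum $\hat 1$ — in particular when $\#\ob(D)=1$, and more generally whenever $D$ has a unique maximal element — then $\hat 1$ is a terminal object of $D$, so Proposition~\ref{value} gives $\int_D F_{\ast}f\,d\chi=(F_{\ast}f)(\hat 1)=\int_{F^{-1}(\wedge\hat 1)}f\,d\chi=\int_{F^{-1}(D)}f\,d\chi=\int_C f\,d\chi$. Otherwise $D$ has two distinct maximal elements $m_1,m_2$; then each $\{m_i\}$ is a filter of $D$, so $D_i:=D\setminus\{m_i\}$ is an ideal of $D$, with $D=D_1\cup D_2$ and $D_1\cap D_2=D\setminus\{m_1,m_2\}$, all proper sub-posets. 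Applying the inclusion–exclusion formula for Euler integration proved above to $F_{\ast}f$ on $D=D_1\cup D_2$, rewriting each term by the first observation as $\int_{D_i}\bigl(F|_{F^{-1}(D_i)}\bigr)_{\ast}\bigl(f|_{F^{-1}(D_i)}\bigr)\,d\chi$, and invoking the induction hypothesis on the measurable maps $F|_{F^{-1}(D_i)}\colon F^{-1}(D_i)\to D_i$ and $F|_{F^{-1}(D_1\cap D_2)}\colon F^{-1}(D_1\cap D_2)\to D_1\cap D_2$, yields
\[
\int_D F_{\ast}f\,d\chi=\int_{F^{-1}(D_1)}f\,d\chi+\int_{F^{-1}(D_2)}f\,d\chi-\int_{F^{-1}(D_1\cap D_2)}f\,d\chi .
\]
Since $F^{-1}(D_1)$ and $F^{-1}(D_2)$ are ideals of $C$ with union $F^{-1}(D)=C$ and intersection $F^{-1}(D_1\cap D_2)$, a second application of the same inclusion–exclusion formula, now inside $C$, identifies the right-hand side with $\int_C f\,d\chi$, closing the induction.

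I expect the only delicate point to be the bookkeeping for restricted maps: verifying that $F|_{F^{-1}(I)}\colon F^{-1}(I)\to I$ is measurable and that pushforward commutes with passing to such restrictions (the two observations above, which also use that restrictions of definable functions are definable). The hypothesis that $D$ is a poset is used precisely to keep $I_e$ and all the $D_i$ within the class of posets, so that "maximal element" behaves as expected and the induction never leaves the standing hypotheses; everything else is the two inclusion–exclusion formulas together with Proposition~\ref{value}. Finally, $G\circ F$ is measurable, since inverse images of filters and ideals pull back through $F$ after $G$, so $(G\circ F)_{\ast}$ is defined; and by linearity of the integral, of $F_{\ast}$, and of $G_{\ast}$ one could, if desired, restrict attention throughout to $f=\delta_A$ with $A$ a filter of $C$, although the argument above does not require this reduction.
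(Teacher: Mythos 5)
Your argument is correct, but it runs in the opposite logical direction from the paper's. The paper fixes $f=\delta_{B}$ for a filter $B$ of $C$ and, for each $e\in\ob(E)$, proves by induction on the cardinality of ideals $A\subseteq G^{-1}(\wedge e)$ of the poset $D$ (peeling off a maximal element $a$ and combining Proposition \ref{value} with Theorem \ref{inclusion-exclusion}) that $\int_{A}F_{\ast}\delta_{B}\,d\chi=\chi(F^{-1}(A)\cap B)$; the Fubini statement, Theorem \ref{Fubini}, is then deduced afterwards as a corollary of the proposition by composing with $D\to\mathbf{pt}$. You instead prove the Fubini identity $\int_{C}f\,d\chi=\int_{D}F_{\ast}f\,d\chi$ first and directly, by induction on $\#\ob(D)$: when $D$ has a unique maximal element it is terminal (here the poset hypothesis is used, exactly where the paper's counterexample with two parallel arrows shows it must be), and Proposition \ref{value} closes that case; otherwise you split $D$ along two maximal elements into the ideals $D_{1},D_{2}$ and apply the integral inclusion--exclusion theorem twice, once on $D$ and once on $C$, after checking that pushforward commutes with restriction to $F^{-1}(I)\to I$ for an ideal $I$ of $D$. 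The composition law then follows by localizing at each $I_{e}=G^{-1}(\wedge e)$; there is no circularity, since you use only Proposition \ref{value} and the inclusion--exclusion theorem, which precede the proposition. The trade-off: the paper's route stays with incidence functions and produces the explicit formula $\int_{A}F_{\ast}\delta_{B}\,d\chi=\chi(F^{-1}(A)\cap B)$, while yours handles a general definable $f$ without reduction to $\delta_{B}$, makes the role of the poset hypothesis more transparent, and packages the bookkeeping into the two restriction lemmas you state. One caveat, shared with the paper rather than specific to you: both proofs tacitly assume that the relevant subcategories of $C$ (preimages of ideals, and filters inside such ideals, e.g. $F^{-1}(\wedge d)\cap B$) are measurable so that the integrals defining pushforwards and the induction hypotheses make sense; this is implicit already in the definition of $F_{\ast}$, so it is not a gap you introduced, but if you wanted a fully self-contained write-up you would note it as a standing assumption or verify it in your setting.
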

\begin{proof}
The pushforward $(G \circ F)_{\ast} : \DF(C) \to \DF(E)$ is given by
\[
(G \circ F)_{\ast}(\delta_{B})(e) = \int_{(G \circ F)^{-1}(\wedge e)} \delta_{B} d \chi = \chi(F^{-1}(G^{-1}(\wedge e)) \cap B),
\]
for $B \in \F(C)$ and $e \in \ob(E)$.
We need to prove that $(G \circ F)_{\ast} (\delta_{B}) = G_{\ast}(F_{\ast}(\delta_{B}))$ for any $B \in \F(C)$.
It suffices to show that 
\begin{equation}
\label{(2)}
\begin{split}
\int_{A} F_{\ast}\delta_{B} d \chi= \chi(F^{-1}(A) \cap B)
\end{split}
\end{equation}
for any $e \in \ob(E)$ and ideal $A \subset G^{-1}(\wedge e)$. We again use induction on the cardinality of the ideal $A$.
When $A$ consists of a single (minimal) element $d$, 
\[
\int_{\{d\}} F_{\ast}\delta_{B} d \chi = F_{\ast}\delta_{B}(d) = \int_{F^{-1}(d)} \delta_{B} d \chi = \chi(F^{-1}(d) \cap B).
\]
Assume that equation (\ref{(2)}) holds in the case of $A^{\sharp} \leqq n-1$. 
When $A^{\sharp}=n$, we take a maximal element $a \in A$. By Proposition \ref{value}, Theorem \ref{inclusion-exclusion}, and the inductive assumption, we have the following equality:
\begin{equation}
\begin{split}
&\int_{A} F_{\ast}\delta_{B} d \chi = \int_{A \backslash (\vee a)}  F_{\ast}\delta_{B} d \chi+ \int_{\wedge a}  F_{\ast}\delta_{B} d \chi 
- \int_{(A \backslash (\vee a)) \cap(\wedge a)}  F_{\ast}\delta_{B} d \chi \\ \notag
=& \chi(F^{-1}(A \backslash (\vee a)) \cap B) + F_{\ast}\delta_{B}(a) - \chi(F^{-1}((A \backslash (\vee a)) \cap (\wedge a)) \cap B)\\
=& \chi(F^{-1}(A \backslash (\vee a)) \cap B) + \chi( F^{-1}(\wedge a) \cap B) - \chi(F^{-1}((A \backslash (\vee a)) \cap (\wedge a)) \cap B)\\
=& \chi(F^{-1}(A) \cap B).
\end{split}
\end{equation}
When $A=G^{-1}(\wedge e)$, we obtain the desired result:
\[
G_{\ast}(F_{\ast}(\delta_{B}))(e)=\int_{G^{-1}(\wedge e)} F_{\ast}(\delta_{B}) = \chi(F^{-1}(G^{-1}(\wedge e)) \cap B)= (G \circ F)_{\ast}(\delta_{B})(e).
\]
\end{proof}

In the proposition above, assuming $D$ to be a poset is essential to prove.
For example, if $D$ is a category consisting of two objects $a$ and $b$, and parallel two morphisms from $a$ to $b$. The classifying space of $D$ is homotopy equivalent to a circle $S^{1}$, and the Euler characteristic $\chi(D)=0$.
We have $(1_{D})_{\ast}(\delta_{D})(b)= \chi(D)=0$, however, 
\[
(1_{D})_{\ast} ((1_{D})_{\ast} (\delta_{D}))(b)=\int_{D} (\delta_{\vee a} +(-1) \delta_{\vee b}) d\chi = \chi(D)-\chi(\{b\})=-1. 
\]

\begin{corollary}
The pushforward yields a functor from the category of posets to the category of $\mathbb{Q}$-vector spaces, and it is given by $P \mapsto \DF(P)$ and $F \mapsto F_{\ast}$.
\end{corollary}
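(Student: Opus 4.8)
The plan is to assemble, from the results already available, the data and axioms of a functor $\DF\colon \mathbf{Pos}\to\{\mathbb{Q}\text{-vector spaces}\}$. First I would pin down the source category: take its objects to be finite posets and its morphisms to be measurable maps (this already contains every order-preserving map, since such a map of posets is a functor and hence measurable). That this is a category is routine: the identity map of a poset is measurable, and if $F\colon P\to Q$ and $G\colon Q\to R$ are measurable then $(G\circ F)^{-1}(A)=F^{-1}(G^{-1}(A))$ sends filters to filters and ideals to ideals, so $G\circ F$ is measurable. Moreover every finite poset is measurable, because each of its full subcategories has an Euler characteristic, so $\DF(P)$ is an honest $\mathbb{Q}$-vector space for each $P$.

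Next I would check that, for a measurable map $F\colon P\to Q$, the pushforward $F_{\ast}$ is a morphism $\DF(P)\to\DF(Q)$ in the target category. Well-definedness, i.e. that $F_{\ast}f$ is again definable, has already been established; and for posets the range of each integral, $F^{-1}(\wedge d)$, is again a finite poset, hence measurable, so $F_{\ast}f(d)=\int_{F^{-1}(\wedge d)}f\,d\chi$ makes sense. Finally, $\mathbb{Q}$-linearity of $F_{\ast}$ is immediate from this defining formula together with the linearity of Euler integration in the integrand.

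It remains to verify the two functor axioms. Compatibility with composition, $(G\circ F)_{\ast}=G_{\ast}\circ F_{\ast}$, is exactly Proposition \ref{functorial}, whose standing hypothesis that the middle category be a poset is automatic here. For preservation of identities I would take $f\in\DF(P)$ and $x\in P$ and compute
\[
(\id_{P})_{\ast}f(x)=\int_{\id_{P}^{-1}(\wedge x)}f\,d\chi=\int_{\wedge x}f\,d\chi=f(x),
\]
where the last equality is Proposition \ref{value} applied to the finite poset $\wedge x$, which has $x$ as a terminal object; hence $(\id_{P})_{\ast}=\id_{\DF(P)}$.

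I do not expect a genuine obstacle here. The substantive content — compatibility with composition, and the fact that it really does require the poset hypothesis — has already been isolated in Proposition \ref{functorial}. The only fresh point is the identity axiom, and it collapses to Proposition \ref{value} the moment one notices that in a poset the prime ideal $\wedge x$ has $x$ as a terminal object; the rest is bookkeeping about the category of posets being closed under composition of measurable maps.
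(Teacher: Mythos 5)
Your proposal is correct and follows essentially the same route as the paper: composition is Proposition \ref{functorial} (with the poset hypothesis automatic), and preservation of identities reduces to Proposition \ref{value} applied to $\wedge x$, which has $x$ as a terminal object. The extra bookkeeping you include (closure of measurable maps under composition, linearity of $F_{\ast}$) is routine and only makes the argument more complete than the paper's brief version.
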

\begin{proof}
For the identity map $\id_{P} : P \to P$, the pushforward $F_{\ast}(\id_{P})=\id_{\DF(P)}$, since
\[
(\id_{P})_{\ast}(f)(a) = \int_{\wedge a} f d \chi =f(a)
\]
for any $f \in \DF(P)$ and $a \in P$, by Proposition \ref{value}.
Proposition \ref{functorial} completes the proof.
\end{proof}

\begin{theorem}\label{Fubini}
Let $F : C \to D$ be a measurable map from a measurable category $C$ to a finite poset $D$.
For any definable function $f$ on $C$, the Euler integration over $f$ coincides with the Euler integration over the pushforward of $f$:
\[
\int_{C} f d \chi = \int_{D} F_{\ast}f d \chi.
\]
\end{theorem}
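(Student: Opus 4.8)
The plan is to derive this from the functoriality of the pushforward (Proposition~\ref{functorial}) by composing $F$ with the unique map to the terminal poset, rather than redoing an inclusion--exclusion induction from scratch. Write $\mathbf{1}$ for the poset with a single object $\ast$, and let $G\colon D\to\mathbf{1}$ be the only map on objects. Then $G$ is measurable: the only filters (equivalently ideals) of $\mathbf{1}$ are $\emptyset$ and $\{\ast\}$, and their preimages $\emptyset$ and $D$ are an ideal and a filter of $D$. Since $\mathbf{1}$ is a finite poset it is measurable, $D$ is a poset by hypothesis, and $C$ is measurable by hypothesis, so Proposition~\ref{functorial} applies to $C\xrightarrow{F}D\xrightarrow{G}\mathbf{1}$ and yields $(G\circ F)_{\ast}=G_{\ast}\circ F_{\ast}$ as maps $\DF(C)\to\DF(\mathbf{1})$.

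It remains to identify the two sides with Euler integrations. Because $\mathbf{1}$ is a poset, $\DF(\mathbf{1})$ is the one-dimensional space of $\Q$-valued functions on $\{\ast\}$, and I would take evaluation at $\ast$ as the identification $\DF(\mathbf{1})\cong\Q$. In $\mathbf{1}$ one has $\wedge\ast=\{\ast\}$, so $G^{-1}(\wedge\ast)=D$ and hence $G_{\ast}g(\ast)=\int_{D}g\,d\chi$ for every $g\in\DF(D)$; likewise $G\circ F$ is measurable as a composite of measurable maps, $(G\circ F)^{-1}(\wedge\ast)=C$, and therefore $(G\circ F)_{\ast}f(\ast)=\int_{C}f\,d\chi$ for every $f\in\DF(C)$. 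Evaluating the identity $(G\circ F)_{\ast}=G_{\ast}\circ F_{\ast}$ at $\ast$ on an arbitrary $f$ then gives exactly $\int_{C}f\,d\chi=\int_{D}F_{\ast}f\,d\chi$.

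Along this route there is essentially no obstacle beyond checking measurability of $G$ and $G\circ F$ and unwinding the isomorphism $\DF(\mathbf{1})\cong\Q$, so the real work is hidden inside Proposition~\ref{functorial}. If instead one wanted a self-contained argument, by linearity it suffices to treat $f=\delta_{B}$ with $B\in\F(C)$ and prove $\int_{D}F_{\ast}\delta_{B}\,d\chi=\chi(B)$ by induction on the number of objects of $D$: when $D$ has a terminal object $d_{0}$ apply Proposition~\ref{value} directly, and otherwise pick a maximal object $d_{0}$, write $D=(D\setminus\{d_{0}\})\cup\wedge d_{0}$ as a union of two ideals of $D$, apply the inclusion--exclusion theorem for the integral, evaluate $\int_{\wedge d_{0}}F_{\ast}\delta_{B}\,d\chi=F_{\ast}\delta_{B}(d_{0})=\chi(F^{-1}(\wedge d_{0})\cap B)$ by Proposition~\ref{value} (the object $d_{0}$ is terminal in $\wedge d_{0}$), observe that the restriction of $F_{\ast}\delta_{B}$ to any ideal $D'$ of $D$ is again the pushforward along $F|_{F^{-1}(D')}$ because $\wedge_{D'}d=\wedge_{D}d$ for $d\in D'$, invoke the inductive hypothesis on the two smaller ideals $D\setminus\{d_{0}\}$ and $\wedge d_{0}\setminus\{d_{0}\}$, and close with one more application of Theorem~\ref{inclusion-exclusion} to the subcategories $F^{-1}(-)\cap B$. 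In that direct approach the delicate point, and the one I expect to absorb most of the effort, is verifying that all the subcategories of $C$ appearing (the $F^{-1}(D')$ and their intersections with $B$) are measurable, so that $\int$ and $\chi$ are legitimately defined on them.
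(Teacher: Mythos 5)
Your main argument is exactly the paper's proof: compose $F$ with the unique map to the one-object poset, apply Proposition~\ref{functorial} (which needs $D$ to be a poset), and identify the pushforward to the point with Euler integration. The extra checks you supply (measurability of $G$, $\wedge\ast=\{\ast\}$) and the alternative direct induction are fine but not needed beyond what the paper does.
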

\begin{proof}
Let $\mathbf{pt}$ denote the terminal category consisting of a single object and the identity morphism.
Note that the pushforward of the unique map $C \to \mathbf{pt}$ coincides with the Euler integration
\[
\int_{C} (-) d \chi : \DF(C) \to \DF(\mathbf{pt}) = \Q.
\]
By applying Proposition \ref{functorial} to the composition $C \stackrel{F}{\to} D \to \mathbf{pt}$, we obtain the desired formula.
\end{proof}

The canonical functor $\pi : C \to \Po(C)$ is a measurable map for a measurable category $C$.
Theorem \ref{Fubini} implies that 
\[
\int_{C} f d \chi = \int_{\Po(C)} \pi_{\ast}f d \chi.
\] 
Hence, the Euler integration of a definable function $f$ on a measurable category $C$ can be calculated from the function $\pi_{\ast}f$ on the poset $\Po(C)$.
Note that, in general, the pushforward $\pi_{\ast}f$ does not coincide with the induced map $\tilde{f}$ introduced in Remark \ref{remark}.

\section{Application of discrete Euler integration to sensor network theory}

This section presents an application of Euler integration over a function on a poset.
It is based on the work of Baryshnikov and Ghrist on using topological Euler integration to enumerate targets in a network of sensors in a field.
In \cite{BG09}, they proved that the cardinality of the targets lying on a field can be obtained from the topological Euler integral of the counting function.

We consider a discrete analogue of the above. Assume that our network has the following properties:
\begin{itemize}
\item Our network consists of a finite node that flows in only one direction 
(for examples, transmission of electricity, streams of water or river, and acyclic traffic).
Hence, we can regard a model of such a network as a finite poset $(P,\leq)$. Nodes are elements of $P$, and lines are ordered pairs $(p,q)$, denoted by $p \prec q$, that do not contain $r$, with $p<r<q$ in $P$.
\item It contains finitely many targets $T$ (for example, broken points, special spots, and errors or bugs) on the lines or nodes. More precisely, the targets $T$ form a discrete subset of the Hasse diagram of $P$, where we regard the diagram as a one-dimensional simplicial complex.
Furthermore, let every node have a sensor, and let it count the targets lying below the node. Here, a target $t \in T$ lying on a line or a node $p \preceq q$ is said to be below a node $r$ if $q \leq r$ in $P$. We denote this as $t \leq r$. The sensors return the counting function
\[
h : P \to \mathbb{N} \cup \{0\},
\]
given by the cardinality of targets lying below itself:
\[
h(p)=\{t \in T \mid t \leq p\}^{\sharp}.
\]
\end{itemize}

\begin{theorem}
Given the counting function $h : P \to \mathbb{N} \cup \{0\}$ for a collection of targets $T$ in a network $P$, we have
\[
T^{\sharp} = \int_{P} h d \chi.
\]
\end{theorem}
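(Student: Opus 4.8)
The plan is to decompose the counting function $h$ into contributions from individual targets and apply linearity of Euler integration. Since the targets $T$ form a finite discrete subset of the Hasse diagram, we may write $h = \sum_{t \in T} h_{t}$, where $h_{t} : P \to \N \cup \{0\}$ is the counting function of the single target $t$, namely $h_{t}(p) = 1$ if $t \leq p$ and $h_{t}(p) = 0$ otherwise. Each $h_{t}$ is a $\Q$-valued function on the poset $P$, hence definable, and $P$ is measurable because finite posets are. Since $\int_{P}(-)\, d\chi : \DF(P) \to \Q$ is linear, it suffices to prove $\int_{P} h_{t}\, d\chi = 1$ for every $t \in T$ and then sum over the finitely many targets.

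Next I would identify $h_{t}$ with an incidence function on a principal filter. A target $t$ lies on a cell $p \preceq q$ of the Hasse diagram — either a node (the case $p = q$) or a line $p \prec q$ — and by the definition of ``below'' one has $t \leq r$ precisely when $q \leq r$ in $P$. Therefore $h_{t} = \delta_{\vee q}$, the incidence function on the prime filter $\vee q$ generated by the upper node $q$ of the cell carrying $t$. The filter $\vee q$ has $q$ as an initial object, so Proposition \ref{contractible} gives $\chi(\vee q) = 1$, and by the definition of Euler integration on filters we get $\int_{P} h_{t}\, d\chi = \int_{P} \delta_{\vee q}\, d\chi = \chi(\vee q) = 1$.

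Combining these observations yields $\int_{P} h\, d\chi = \sum_{t \in T} \int_{P} h_{t}\, d\chi = \sum_{t \in T} 1 = T^{\sharp}$. The only step that requires attention is the bookkeeping in the decomposition: unwinding the definition of the counting function to confirm that it is genuinely additive over targets, and that the contribution of a target lying on a line $p \prec q$ collapses to the principal filter $\vee q$ of its upper endpoint exactly as for a target sitting on the node $q$ itself. Beyond this there is no real obstacle; the substantive input is the discrete counterpart of the contractibility of a target support, namely that a principal filter has an initial object and hence Euler characteristic $1$ (Proposition \ref{contractible}), together with linearity of the integral.
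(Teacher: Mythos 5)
Your proposal is correct and is essentially the paper's own argument: the paper likewise writes $h = \sum_{t\in T}\delta_{\vee q_{t}}$ for the upper node $q_{t}$ of the cell carrying $t$, applies linearity of the integral, and uses $\chi(\vee q_{t})=1$ (the prime filter has an initial object) to conclude $\int_{P} h\, d\chi = T^{\sharp}$. Your additional remarks on additivity and on why a target on a line $p\prec q$ contributes exactly $\delta_{\vee q}$ are just a more explicit unwinding of the same one-line computation.
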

\begin{proof}
Let each target $t \in T$ lie on a line or a node $p_{t} \preceq q_{t}$. Then, the following equality holds:
\[
\int_{P} h d\chi = \int_{P} \left(\sum_{t \in T} \delta_{\vee q_{t}} \right) d \chi=  \sum_{t \in T} \chi(\vee q_{t}) = T^{\sharp}.
\]
\end{proof}

\begin{example}
The following diagram represents a counting function $h$ for targets $T$ on a network.
\[
\begin{xy}
\ar@{-} (10,0) *++!R{1} *{\bullet}="A_{1}";
(0,10) *++!R{1} *{\bullet}="D_{2}"
\ar@{-} (30,0) *++!R{1} *{\bullet}="B_{1}";
(20,10) *++!R{2} *{\bullet}="E_{2}"
\ar@{-} (50,0) *++!L{0} *{\bullet}="C_{1}";
(40,10) *++!L{3} *{\bullet}="F_{2}"
\ar@{-} "A_{1}";"E_{2}"
\ar@{-} "B_{1}";"F_{2}"
\ar@{-} "A_{1}";"F_{2}"
\ar@{-} "C_{1}";
(60,10) *++!L{0} *{\bullet}="G_{2}"
\ar@{-} "D_{2}";
(10,20) *++!D{3} *{\bullet}="H_{3}"
\ar@{-} "E_{2}";
(30,20) *++!D{4} *{\bullet}="I_{3}"
\ar@{-} "F_{2}";
(50,20) *++!D{3} *{\bullet}="J_{3}"
\ar@{-} "E_{2}";"H_{3}"
\ar@{-} "F_{2}";"I_{3}"
\ar@{-} "G_{2}";"J_{3}"
\ar@{-} "E_{2}";"J_{3}"
\ar@{-} "C_{1}";"E_{2}"
\ar@{-} "G_{2}";"I_{3}"
\ar@{-} "D_{2}";"I_{3}"
\end{xy}
\]
Here, we describe the underlying poset $P$ as the Hasse diagram.
Let us calculate the Euler integration of $h$ to enumerate the targets. 
The counting function is a poset map, hence $h^{-1}(\vee i)=\{ p \in P \mid h(p) \geq i\}$ is a filter of $P$ for each $i \in \mathbb{N}$.
Then, 
\[
T^{\sharp}=\int_{P}h d \chi = \int_{P}\left(\sum_{i=1}^{\infty} \delta_{h^{-1}(\vee i)} \right) d \chi = \sum_{i=1}^{\infty} \chi(h^{-1}(\vee i)). 
\]
The telescope sum above suggests that the Euler integration of $h$ can be obtained from the Euler characteristic of each level subposet $h^{-1}(\vee i)$.

\[
\begin{xy}
(0,20) *{i=1}, (70,20) *{i=2},
\ar@{-} (10,0) *++!R{} *{\bullet}="A_{1}";
(0,10) *++!R{} *{\bullet}="D_{2}"
\ar@{-} (30,0)  *++!R{} *{\bullet}="B_{1}";
(20,10) *++!R{} *{\bullet}="E_{2}"
\ar@{.} (50,0) *++!L{} *{\circ}="C_{1}";
(40,10) *++!L{} *{\bullet}="F_{2}"
\ar@{-} "A_{1}";"E_{2}"
\ar@{-} "B_{1}";"F_{2}"
\ar@{-} "A_{1}";"F_{2}"
\ar@{.} "C_{1}";
(60,10) *++!L{} *{\circ}="G_{2}"
\ar@{-} "D_{2}";
(10,20) *++!D{} *{\bullet}="H_{3}"
\ar@{-} "E_{2}";
(30,20) *++!D{} *{\bullet}="I_{3}"
\ar@{-} "F_{2}";
(50,20) *++!D{} *{\bullet}="J_{3}"
\ar@{-} "E_{2}";"H_{3}"
\ar@{-} "F_{2}";"I_{3}"
\ar@{.} "G_{2}";"J_{3}"
\ar@{-} "E_{2}";"J_{3}"
\ar@{.} "C_{1}";"E_{2}"
\ar@{.} "G_{2}";"I_{3}"
\ar@{-} "D_{2}";"I_{3}"
\ar@{.} (80,0) *++!R{} *{\circ}="A_{1}";
(70,10) *++!R{} *{\circ}="D_{2}"
\ar@{.} (100,0)  *++!R{} *{\circ}="B_{1}";
(90,10) *++!R{} *{\bullet}="E_{2}"
\ar@{.} (120,0) *++!L{} *{\circ}="C_{1}";
(110,10) *++!L{} *{\bullet}="F_{2}"
\ar@{.} "A_{1}";"E_{2}"
\ar@{.} "B_{1}";"F_{2}"
\ar@{.} "A_{1}";"F_{2}"
\ar@{.} "C_{1}";
(130,10) *++!L{} *{\circ}="G_{2}"
\ar@{.} "D_{2}";
(80,20) *++!D{} *{\bullet}="H_{3}"
\ar@{-} "E_{2}";
(100,20) *++!D{} *{\bullet}="I_{3}"
\ar@{-} "F_{2}";
(120,20) *++!D{} *{\bullet}="J_{3}"
\ar@{-} "E_{2}";"H_{3}"
\ar@{-} "F_{2}";"I_{3}"
\ar@{.} "G_{2}";"J_{3}"
\ar@{-} "E_{2}";"J_{3}"
\ar@{.} "C_{1}";"E_{2}"
\ar@{.} "G_{2}";"I_{3}"
\ar@{.} "D_{2}";"I_{3}"
\end{xy}
\]

\[
\begin{xy}
(0,20) *{i=3}, (70,20) *{i=4},
\ar@{.} (10,0) *++!R{} *{\circ}="A_{1}";
(0,10) *++!R{} *{\circ}="D_{2}"
\ar@{.} (30,0)  *++!R{} *{\circ}="B_{1}";
(20,10) *++!R{} *{\circ}="E_{2}"
\ar@{.} (50,0) *++!L{} *{\circ}="C_{1}";
(40,10) *++!L{} *{\bullet}="F_{2}"
\ar@{.} "A_{1}";"E_{2}"
\ar@{.} "B_{1}";"F_{2}"
\ar@{.} "A_{1}";"F_{2}"
\ar@{.} "C_{1}";
(60,10) *++!L{} *{\circ}="G_{2}"
\ar@{.} "D_{2}";
(10,20) *++!D{} *{\bullet}="H_{3}"
\ar@{.} "E_{2}";
(30,20) *++!D{} *{\bullet}="I_{3}"
\ar@{-} "F_{2}";
(50,20) *++!D{} *{\bullet}="J_{3}"
\ar@{.} "E_{2}";"H_{3}"
\ar@{-} "F_{2}";"I_{3}"
\ar@{.} "G_{2}";"J_{3}"
\ar@{.} "E_{2}";"J_{3}"
\ar@{.} "C_{1}";"E_{2}"
\ar@{.} "G_{2}";"I_{3}"
\ar@{.} "D_{2}";"I_{3}"
\ar@{.} (80,0) *++!R{} *{\circ}="A_{1}";
(70,10) *++!R{} *{\circ}="D_{2}"
\ar@{.} (100,0)  *++!R{} *{\circ}="B_{1}";
(90,10) *++!R{} *{\circ}="E_{2}"
\ar@{.} (120,0) *++!L{} *{\circ}="C_{1}";
(110,10) *++!L{} *{\circ}="F_{2}"
\ar@{.} "A_{1}";"E_{2}"
\ar@{.} "B_{1}";"F_{2}"
\ar@{.} "A_{1}";"F_{2}"
\ar@{.} "C_{1}";
(130,10) *++!L{} *{\circ}="G_{2}"
\ar@{.} "D_{2}";
(80,20) *++!D{} *{\circ}="H_{3}"
\ar@{.} "E_{2}";
(100,20) *++!D{} *{\bullet}="I_{3}"
\ar@{.} "F_{2}";
(120,20) *++!D{} *{\circ}="J_{3}"
\ar@{.} "E_{2}";"H_{3}"
\ar@{.} "F_{2}";"I_{3}"
\ar@{.} "G_{2}";"J_{3}"
\ar@{.} "E_{2}";"J_{3}"
\ar@{.} "C_{1}";"E_{2}"
\ar@{.} "G_{2}";"I_{3}"
\ar@{.} "D_{2}";"I_{3}"
\end{xy}
\]
The classifying space $B(h^{-1}(\vee 1))$ is homotopy equivalent to a sphere $S^{2}$, 
and $B(h^{-1}(\vee 2))$ is homotopy equivalent to a circle $S^{1}$. Proposition \ref{acyclic} leads to the following result:
\[
T^{\sharp}=\chi(h^{-1}(\vee 1)) + \chi(h^{-1}(\vee 2)) + \chi(h^{-1}(\vee 3))+\chi(h^{-1}(\vee 4))=2+0+2+1=5.
\]

Indeed, this counting function was given by the following five targets described as ``$+$".
\[
\begin{xy}
\ar@{-} (10,0) *+{+} *++!R{1} *{\bullet}="A_{1}";
(0,10) *++!R{1} *{\bullet}="D_{2}"
\ar@{-} (30,0) *+{+} *++!R{1} *{\bullet}="B_{1}";
(20,10) *++!R{2} *{\bullet}="E_{2}"
\ar@{-}|{+} (50,0) *++!L{0} *{\bullet}="C_{1}";
(40,10) *++!L{3} *{\bullet}="F_{2}"
\ar@{-} "A_{1}";"E_{2}"
\ar@{-} "B_{1}";"F_{2}"
\ar@{-} "A_{1}";"F_{2}"
\ar@{-} "C_{1}";
(60,10) *++!L{0} *{\bullet}="G_{2}"
\ar@{-}|{+} "D_{2}";
(10,20) *++!D{3} *{\bullet}="H_{3}"
\ar@{-}|{+} "E_{2}";
(30,20) *++!D{4} *{\bullet}="I_{3}"
\ar@{-} "F_{2}";
(50,20) *++!D{3} *{\bullet}="J_{3}"
\ar@{-} "E_{2}";"H_{3}"
\ar@{-} "F_{2}";"I_{3}"
\ar@{-} "G_{2}";"J_{3}"
\ar@{-} "E_{2}";"J_{3}"
\ar@{-} "C_{1}";"E_{2}"
\ar@{-} "G_{2}";"I_{3}"
\ar@{-} "D_{2}";"I_{3}"
\end{xy}
\]

\end{example}

%\subsection*{Acknowledgment}

Institute of Social Sciences, School of Humanities and Social Sciences, Academic Assembly, Shinshu University, Japan.

\textit{E-mail address}: tanaka@shinshu-u.ac.jp

\end{document}